\documentclass[11pt]{article}

\usepackage{amsmath}
\usepackage{amsfonts} 
\usepackage{mathtools} 
\usepackage{esint}
\usepackage{xcolor}
\usepackage{cancel}
\usepackage{soul}
\usepackage{bm}
\usepackage{bbm}
\usepackage{amsthm}
\usepackage{enumerate}
\usepackage{cleveref}
\usepackage{color}

\newcommand{\R}{\mathbb R}

\newcommand{\1}{\mathbbm{1}}

\newcommand{\dv}{\mathrm{d}v}
\newcommand{\dvs}{\mathrm{d}v_*}
\newcommand{\vsi}{{\vs}_i}
\newcommand{\vsj}{{\vs}_j}
\newcommand{\vsk}{{\vs}_k}

\newcommand{\vi}{v_i}
\newcommand{\vj}{v_j}
\newcommand{\vk}{v_k}

\newcommand{\nablas}{\nabla_*}

\newcommand{\nablav}{\nabla_v}

\newcommand{\partialvi}{\partial_{\vi}}

\newcommand{\partialvsi}{\partial_{\vsi}}

\newcommand{\partialvivj}{\partial_{\vi\vj}}

\newcommand{\partialvsivsj}{\partial_{\vsi\vsj}}

\newcommand{\dsigma}{\mathrm{d}\sigma}

\newcommand{\dtheta}{\mathrm{d}\theta}
\newcommand{\domega}{\mathrm{d}\omega}
\newcommand{\dphi}{\mathrm{d}\phi}
\newcommand{\dmu}{\mathrm{d}\mu}
\newcommand{\dnu}{\mathrm{d}\nu}
\newcommand{\dx}{\mathrm{d}x}
\newcommand{\du}{\mathrm{d}u}

\newcommand{\ds}{\mathrm{d}s}

\newcommand{\vphi}{\varphi}
\newcommand{\vphis}{\varphi_*}

\newcommand{\dt}{\mathrm{d}t}

\newcommand{\eps}{\varepsilon}

\newcommand{\bde}{b^{\delta}_\eps(\cos\theta)}
\newcommand{\beps}{b_\eps}

\newcommand{\hu}{\hat{u}}
\newcommand{\hj}{\hat{j}}
\newcommand{\hk}{\hat{k}}
\newcommand{\vs}{v_*}
\newcommand{\fs}{f_*}
\newcommand{\feps}{f_\eps}

\newcommand{\geps}{g_\eps}
\newcommand{\qgeps}{Q_{\geps}}

\newcommand{\meps}{m_\eps }

\newcommand{\dR}{\mathrm{d}R}

\newcommand{\Bbeps}{B_\eps}
\newcommand{\p}{\mathcal{P}}
\newcommand{\peps}{\p_\eps}

\newcommand{\Beps}{\mathcal{B}_\eps}

\newcommand{\etaps}{\eta_p^*}
\newcommand{\psiqs}{\psi_q^*}

\newcommand{\psinfs}{\psi_\infty^*}
\newcommand{\phipps}{\phi_{p'}^*}

\newcommand{\fo}{f_0}

\DeclareMathOperator*{\esssup}{ess\,sup}

\numberwithin{equation}{section}

\date{}
\allowdisplaybreaks

\begin{document}

\title{Existence and uniqueness of $L^p$ solutions to the Boltzmann equation with an angle-potential concentrated collision kernel}

\author{Sona Akopian\\
sakopian@math.utexas.edu,\\
Irene M. Gamba\\
gamba@math.utexas.edu.\\
Department of Mathematics \\
University of Texas at Austin\\
}

\maketitle
{\begin{abstract} 
%
	
We solve the Cauchy problem associated to the space homogeneous Boltzmann equation with an angle-potential singular concentration modeling the collision kernel, proposed in \cite{bp}.	The potential under consideration ranges from Coulomb to hard spheres cases. However, the motivation of such a collision kernel is to treat the case of Coulomb potentials, on which this particular form of collision operator is well defined. We also show that the scaled angle-potential singular concentration in a grazing collisions limit makes the Boltzmann operator converge in the sense of distributions to the Landau operator acting on the Boltzmann solutions.

\end{abstract}}

{\bf Keywords:} {kinetic theory, soft potentials, Coulomb forces, grazing collisions limit, long range interactions, abstract ODE theory.\\ 

\tableofcontents

\section{Introduction} \label{introduction}
Aside from being evolution equations in nonlocal (differential-integral) form, the Boltzmann and Landau equations are closely related mathematically in the sense that the Landau equation was formally derived in 1937 (see ~\cite{landau}) from the Boltzmann equation which, on its own, cannot describe plasmas. This is due to the fact that the intermolecular Coulomb forces are so strong, that the singularities they create in the collision integral of the Boltzmann equation (the nonlocal integral term) are of the critical order at which the integral diverges.
Hence, the Boltzmann equation in this case is ill - posed.
Landau, however, was able to use the structure of the Boltzmann equation's \textit{collision kernel} (the weight in the collision integral that models probability rates of two interacting particles transitioning from their pre- to their post-collisional states) heuristically to derive a proper, convergent, collision operator that describes particle interactions in this special case.


\subsection{Description of the Boltzmann and Landau equations}\label{description of equations}
The formulation of the problem, in the $x$-uniform framework (known as the space homogeneous problem), is posed as follows:
let $f=f(v,t)$, for $(v,t)\in \R^3\times (0,\infty)$, be a probability density function describing the probability of finding a particle with velocity $v$ at time $t$. Let $\vs$ denote the velocity of a particle about to collide with the first, and let $v'$ and $\vs'$ denote their respective velocities before or after a reversible (elastic) collision. Also in the elastic case, collisions must conserve momentum and kinetic energy:
	\begin{align}
		& v' + \vs' = v + \vs 
		&&
		\text{ and } 
		& |v'|^2 + |\vs'|^2 = |v|^2 + |\vs|^2.
	\label{collisionconslaws}\end{align}
Defining the relative velocities $u:= v-\vs$ and $u': = v'-\vs'$ and letting $\sigma: = \hu' = u'/|u'| \in S^2$ denote the \textit{scattering direction,} the post - (or pre-) collisional velocities may be written as
	\begin{align*}
		& v' = v'(v,\vs,\sigma) = v + \frac{1}{2}(|u|\sigma - u),
		& \vs' = \vs'(\vs,v,\sigma) = \vs - \frac{1}{2}(|u|\sigma - u).
	\end{align*}
 One can represent $\sigma \in S^2$ as
 	\begin{equation}
		\sigma = \hu \cos\theta + \omega\sin\theta,
	\label{sigma}\end{equation}
where $\omega \in u^\perp,$ $|\omega| = 1$ is in turn decomposed into
	\begin{equation}
		\omega = \hj \cos\phi + \hk\sin\phi.
	\label{omega}\end{equation}
Here $j,k \in u^\perp$ are defined as $ j:= (1,0,0) - \hu\hu_1$, $k = j\times\hu.$

The Cauchy problem for the Boltzmann equation is written in strong form as 
	{\small
	\begin{equation}\label{bte}
		\begin{dcases}
			&\partial_t f(v)
			= Q_{B}(f,f)(v)
			:= \iint_{\R^3\times S^2}B(|u|^\gamma, \hu\cdot\sigma)\cdot\\
			& \hspace{2.25in} \cdot \Big(f\left(v'\right)f\left(\vs'\right) - f(v)f(\vs)\Big)\dsigma\dvs\\
			& f(v)\Big|_{t=0} = \fo(v),
		\end{dcases}
	\end{equation}
	}
where $\hat{z}: = z/|z|$ for any $z\in \R^3,$ and we have omitted the $t$ variable for convenience.
The collision kernel, $B(|u|^\gamma,\hu\cdot\sigma)$, is often modeled as 
	\begin{multline}
		B(|u|^\gamma, \hu\cdot\sigma)\dsigma 
		:= |u|^\gamma b(\hu\cdot\sigma)\dsigma\\
		= |u|^\gamma b(\cos\theta)\sin\theta\dtheta\domega
		= |u|^\gamma \sin^{-m}(\theta/2)\sin\theta\dtheta\domega,
	\label{B}\end{multline}
and the spaces $L^p_k$ are defined as 	
	\begin{equation*}
		L^p_k(\R^d): = \left\{ f\in L^p(\R^d): \|f\|_{L^p_k(\R^d)} = \left(\int_{\R^d} f^p(v) (1+ |v|^2)^\frac{pk}{2}\dv\right)^{\frac{1}{p}}\right\}.
	\end{equation*}

The parameter $\gamma \in (-3, 0)$ corresponds to soft potentials (repulsive forces), and $\gamma = -3$, which is only possible in (\ref{lte}), corresponds to Coulomb forces.  The case $\gamma \in (0, 1]$ corresponds to hard potential and was extensively studied in \cite{dvvillani1, dvvillani2, dvvillani3}, and $\gamma = 0$ describes Maxwell molecule interactions (see for example ~\cite{alonsocarneirogamba, ricardo, bgp, bp, villani}).

The weight $b(\hu\cdot\sigma)\dsigma$ is known as the \textit{angular cross section}, and while it does not need to be defined exactly as it is in (\ref{B}), $b(\cdot)$ is always an even, nonnegative function that must satisfy
	\begin{equation}
		\int_0^{\pi/2}b(\cos\theta)\sin^2\theta\cos(\theta/2)\dtheta < \infty
	\label{bsingularity}\end{equation}
in order for $Q_B$ to be well defined in weak form (see \cite{advw, ccl} for discussion on the cancellation lemma). In the case of (\ref{B}), this means that $m < 4.$
 
In the case of Coulomb potentials (when $\gamma = -3$), the cross section has been determined to be of the \textit{Rutherford type}, where $b(\hu\cdot\sigma)\dsigma = b(\cos\theta)\sin\theta\dtheta \sim \sin^{-3}(\theta/2)\dtheta$ for $\theta <<\pi/2.$ This corresponds to $m=4$ from (\ref{B}), so $Q_B$ is no longer well defined. Recall that in this case we use the Landau equation to model particle interactions. Nonetheless, an $\eps$-truncation of the Boltzmann equation's $b(\cos\theta)$ helps us analyze the asymptotics and derive the Landau equation, whose strong form is
	{\small
	\begin{equation}
		\begin{dcases}
			&  \partial_tf(v) = Q_L(f,f)(v) 
			: = \nablav \cdot \int_{\R^3} |u|^{\gamma + 2}\Pi(u) \cdot\\
			& \hspace{2.25 in} \cdot \Big(  f(\vs)\nabla f(v)- \nabla f(\vs) f(v)\Big)\dvs,\\
			 &  f(v)\Big|_{t=0} \!\!\! =: \fo(v),
		\label{lte}\end{dcases}
	\end{equation}
	}
where $\Pi(u): = I_{3\times 3} - \hu\otimes\hu \in \R^{3\times 3}$ projects onto the space $u^\perp.$ 

There are several important similarities between the Boltzmann and Landau equations.
For example, one can check, by using (\ref{collisionconslaws}), the symmetry of $B(|u|,\hu\cdot\sigma)$ and exchanging variables in $Q_B(f,f)$ and $Q_L(f,f)$, that solutions of both the Boltzmann and Landau equations conserve mass, momentum and kinetic energy:
\begin{equation}
		\frac{d}{dt}\int_{\R^3} f(v,t)(1, v, |v|^2)\dv
		= \int Q_{B,L} (f,f)(v,t) (1,v, |v|^2)\dv = 0,
	\end{equation}
that is,	
	\begin{equation}
		\int f(v,t) (1,v,|v|^2)\dv = \int \fo(v)(1,v,|v|^2)\dv.
	\label{conslaws}\end{equation}
Also both equations satisfy the H-Theorem, meaning that
	\begin{equation}
		-\frac{d}{dt}\mathcal{H}(t)
		= - \frac{d}{\dt}\int f\log f\dv
		= \int Q_{B,L}(f,f)\log f\dv
		\leq 0,
	\label{hthm}\end{equation}
with equality holding if and only if $f$ is a Gaussian in the velocity variable. 

However, in order to see exactly how $Q_B$ turns into $Q_L$ when collisions become grazing, we need to carefully study the limiting behavior of a properly $\eps-$truncated collision cross section, $\beps$, for which the Boltzmann equation does not yet fall apart.

	
\subsection{The grazing collisions limit}

The most common truncation of the Boltzmann collision cross section is
	\begin{equation}
		\beps(\cos\theta) = \frac{I_{\theta\geq\eps}}{|\log\sin(\eps/2)|}b(\cos\theta)
	\label{ruthtrunc}\end{equation}
(see ~\cite{landau, dl, villani,gambahaack}). 
In ~\cite{gambahaack}, the authors were able to extend this to an even stronger $\theta$-singularity in $b(\cos\theta)$ with a suitable truncation, and still obtain the Landau equation: for $\delta \in [0,2)$ they define 
	\begin{equation}
		\bde: = \frac{I_{\theta\geq\eps}}{H_\delta(\sin(\eps/2))}\sin^{-(4+\delta)}(\theta/2),
	\label{ghtruncation}\end{equation}
where $H_\delta $ is such that $H_\delta'(x) = x^{-(\delta+1)}$ (if $\delta = 0$ then we recover (\ref{ruthtrunc})). Moreover, the rate of convergence of the Boltzmann collision integral $Q_{B_\eps}$ to the corresponding Landau collision term is much higher for $\delta >0.$ In a sense, the angular cross sections $\beps$ approximate a singular point mass distribution as $\eps\longrightarrow 0$, which is a signature of the Landau derivation. This limit is called the \textit{grazing collisions limit.}

It's important to note that one does not need to use the exact truncation (or the exact collision kernel) above to get the grazing collisions limit. In fact, according to \cite{villani} it suffices for $\Bbeps$ to satisfy the following three conditions, pointwise in $u\in\R^3$:
	\begin{align}
		& \beta_2[\Bbeps](u)
		:= \int_0^{\frac{\pi}{2}}\Bbeps(|u|^\gamma, \cos\theta)\sin^2(\theta/2)\sin\theta\dtheta \nonumber\\
		& \hspace{2.75in} \to \frac{2}{\pi}|u|^\gamma \text{ as } \eps\to 0,\label{L1}\\
		& \forall k>2,  \hspace{.1in}
		\beta_k[\Bbeps] (u):= \int_0^{\frac{\pi}{2}}\Bbeps(|u|^\gamma, \cos\theta) \sin^{k}(\theta/2)\sin\theta\dtheta \nonumber\\
		&\hspace{3in}\to 0  \text{ as } \eps\to 0, \label{L2}\\
		&
		|u|^{-\gamma}\Bbeps(|u|^\gamma, \cos\theta) 
		\to 0 \text{ as } \eps\to 0, \nonumber\\
		&\hspace{1.75in} \text{ uniformly on } \{\theta>\theta_0\}, \hspace{.1cm} \forall \theta_0 \in (0, \pi/2). \label{L3}
	\end{align}
Indeed, one can show that if (\ref{L1})- (\ref{L3}) hold and $B_\eps(|u|^\gamma, \hu\cdot\sigma) = |u|^\gamma \beps(\hu\cdot\sigma)$, then $Q_{B_\eps}(f,f) \rightarrow Q_L(f,f)$ as $\eps\rightarrow 0$ in the sense of distributions. A sketch of the following theorem can be found in \cite{villani}:
%
%
\newtheorem{qconvergence}{Proposition}[section]
\begin{qconvergence}
Consider a sequence of nonnegative collision kernels, $B_\eps = B_\eps(|u|^\gamma, \hu\cdot\sigma) = |u|^{\gamma}b_\eps(\hu\cdot\theta)$, $ -3\leq \gamma \leq -1,$ satisfying properties (\ref{L1}) - (\ref{L3}), and let $0 \leq f\in L^1_2\cap L^p$, where
	\begin{align*}
		& p \geq \frac{6}{6-|\gamma + 2|} \text{ if } \gamma \leq -2,\\
		& p > 1 \text{ if } \gamma > -2.
	\end{align*}
Then for any test function $\vphi \in C_0^\infty(\R^3)$ and for any $t >0,$
	\begin{equation}
		\lim_{\eps\to 0}\left|\int \left(Q_{B_\eps}(f,f)(v,t)- Q_L(f,f)(v,t)\right)\vphi(v)\dv\right|
		= 0.
	\end{equation}
\label{qconvergence}\end{qconvergence}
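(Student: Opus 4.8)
The plan is to pass to the grazing limit directly inside the distributional pairing with a test function $\vphi\in C_0^\infty(\R^3)$, fixing $t$ and treating $f=f(\cdot,t)$ as a fixed element of $L^1_2\cap L^p$. Writing the symmetrized weak form of the Boltzmann operator,
\begin{equation*}
\int Q_{B_\eps}(f,f)\vphi\dv=\frac12\iiint_{\R^3\times\R^3\times S^2} B_\eps(|u|^\gamma,\hu\cdot\sigma)\,f(v)f(\vs)\big[\vphi(v')+\vphi(\vs')-\vphi(v)-\vphi(\vs)\big]\dsigma\dvs\dv ,
\end{equation*}
and the weak form of the Landau operator obtained by integrating by parts so that all derivatives fall on $\vphi$, using $\sum_j\partial_{u_j}\big(|u|^{\gamma+2}\Pi(u)\big)_{ij}=-2|u|^\gamma u_i$,
\begin{equation*}
\int Q_L(f,f)\vphi\dv=\iint_{\R^3\times\R^3}f(v)f(\vs)\Big[-4|u|^\gamma\,u\cdot\nabla\vphi(v)+|u|^{\gamma+2}\,\Pi(u):D^2\vphi(v)\Big]\dvs\dv ,
\end{equation*}
it suffices to prove that for a.e.\ $(v,\vs)$ the inner $\sigma$-average of the Boltzmann integrand converges to
\begin{equation*}
-2|u|^\gamma\,u\cdot\big(\nabla\vphi(v)-\nabla\vphi(\vs)\big)+\tfrac12|u|^{\gamma+2}\,\Pi(u):\big(D^2\vphi(v)+D^2\vphi(\vs)\big),
\end{equation*}
whose $\dvs\dv$-integral against $f(v)f(\vs)$ coincides with $\int Q_L(f,f)\vphi\dv$ by the $v\leftrightarrow\vs$ symmetry of the outer integral, and then to interchange $\lim_{\eps\to0}$ with that integral by dominated convergence.

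For the pointwise step I Taylor-expand to second order about $v$ and about $\vs$: with $\delta:=v'-v=\tfrac12(|u|\sigma-u)$, $\vs'-\vs=-\delta$ and $|\delta|=|u|\sin(\theta/2)$,
\begin{equation*}
\vphi(v')+\vphi(\vs')-\vphi(v)-\vphi(\vs)=\delta\cdot\big(\nabla\vphi(v)-\nabla\vphi(\vs)\big)+\tfrac12\,\delta\otimes\delta:\big(D^2\vphi(v)+D^2\vphi(\vs)\big)+R,\qquad|R|\le C\|\vphi\|_{C^3}|\delta|^3 .
\end{equation*}
Carrying out the $\sigma$-integration in the representation (\ref{sigma})--(\ref{omega}), the $\omega$-average kills $\int\omega\domega$ and turns $\int\omega\otimes\omega\domega$ into a multiple of $\Pi(u)$, the $\hu$-component of $|u|\sigma-u$ is $-2|u|\sin^2(\theta/2)$, and $(|u|\sigma-u)\otimes(|u|\sigma-u)$ produces a $\sin^2\theta\,\Pi(u)$ piece plus a $\sin^4(\theta/2)\,\hu\otimes\hu$ piece; writing $\sin^2\theta=4\sin^2(\theta/2)-4\sin^4(\theta/2)$, every surviving contribution is weighted by $\beta_2[B_\eps](u)=|u|^\gamma\int_0^{\pi/2}b_\eps\sin^2(\theta/2)\sin\theta\dtheta$, which by (\ref{L1}) tends to $\tfrac2\pi|u|^\gamma$ and, after division by $|u|^\gamma$, is a $u$-independent constant, while the $\hu\otimes\hu$ term, the $\sin^4(\theta/2)$ corrections and the remainder $R$ are of $\beta_k$-type with $k>2$ and vanish by (\ref{L2})---equivalently, splitting $\int_0^{\pi/2}$ at a small $\theta_0$, the non-grazing part is destroyed by the uniform decay (\ref{L3}) while the grazing Taylor error is $O(\theta_0)$ times a quantity controlled by $\beta_2[B_\eps]$. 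This gives the a.e.\ limit claimed above.

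The heart of the argument is the domination, and here the symmetrized form is essential. After the $\sigma$-integration the first-order term is $-2\pi|u|^\gamma\,u\cdot(\nabla\vphi(v)-\nabla\vphi(\vs))\int_0^{\pi/2}b_\eps\sin^2(\theta/2)\sin\theta\dtheta$, and the cancelled combination $\nabla\vphi(v)-\nabla\vphi(\vs)$ supplies the extra factor $|v-\vs|=|u|$ that upgrades the crude $|u|^{\gamma+1}$ to $|u|^{\gamma+2}$; combining this with $\int_{S^2}B_\eps|\delta|^2\dsigma=2\pi|u|^{\gamma+2}\int_0^{\pi/2}b_\eps\sin^2(\theta/2)\sin\theta\dtheta$ for the second-order term and $\int_{S^2}B_\eps|\delta|^3\dsigma=2\pi|u|^{\gamma+3}\int_0^{\pi/2}b_\eps\sin^3(\theta/2)\sin\theta\dtheta$ for $R$, and using $\sin^3(\theta/2)\le\sin^2(\theta/2)$ on $[0,\pi/2]$ together with the uniform-in-$\eps$ bound $\int_0^{\pi/2}b_\eps\sin^2(\theta/2)\sin\theta\dtheta\le C_0$ for $\eps$ small (from (\ref{L1})), one obtains
\begin{equation*}
\Big|\tfrac12\int_{S^2}B_\eps\big[\vphi(v')+\vphi(\vs')-\vphi(v)-\vphi(\vs)\big]\dsigma\Big|\le C_\vphi\big(|u|^{\gamma+2}+|u|^{\gamma+3}\big)\qquad(\eps<\eps_0),
\end{equation*}
globally in $(v,\vs)$, the Taylor remainder estimate being global. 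It remains to check $\big(|u|^{\gamma+2}+|u|^{\gamma+3}\big)f(v)f(\vs)\in L^1(\R^3\times\R^3)$: the term $|u|^{\gamma+3}$ has exponent in $[0,2]$, so it is bounded near $u=0$ and grows at most like $|u|^2$ at infinity, hence is $f\otimes f$-integrable by the mass-energy bound $f\in L^1_2$; the term $|u|^{\gamma+2}$ is likewise controlled by $f\in L^1_2$ when $\gamma\ge-2$, while for $\gamma<-2$ it is singular at the diagonal and the Hardy-Littlewood-Sobolev inequality gives $\iint|v-\vs|^{\gamma+2}f(v)f(\vs)\dvs\dv\lesssim\|f\|_{L^q}^2$ with $q=6/(6-|\gamma+2|)\in(1,2)$, finite exactly under the hypothesis $p\ge 6/(6-|\gamma+2|)$ since then $f\in L^1\cap L^p\subset L^q$. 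Dominated convergence now gives $\int(Q_{B_\eps}(f,f)-Q_L(f,f))\vphi\dv\to0$, which is the assertion.

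The step I expect to be the real obstacle is precisely this last point: bringing the uniform bound down to the rate $|u|^{\gamma+2}$. This forces one to keep the symmetrized weak form, so that the first-order collisional correction enters in the cancelled combination $u\cdot(\nabla\vphi(v)-\nabla\vphi(\vs))$ rather than as $u\cdot\nabla\vphi(v)$; the latter only produces the lower power $|u|^{\gamma+1}$, whose near-diagonal integrability against $f\otimes f$ would demand $f\in L^{6/(6-|\gamma+1|)}$, a strictly stronger requirement than the assumed $L^{6/(6-|\gamma+2|)}$ once $\gamma<-2$ (e.g.\ $f\in L^{3/2}$ versus $L^{6/5}$ at $\gamma=-3$). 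Once the rate $|u|^{\gamma+2}$ is in hand, matching it to the admissible $L^p$ range is exactly the Hardy-Littlewood-Sobolev step. Everything else---the Taylor expansion, the $\sigma$-averages, the bookkeeping of $\sin(\theta/2)$-powers via (\ref{L1})--(\ref{L3}), and the conservation-law control of the large-velocity tails---is routine.
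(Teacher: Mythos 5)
Your argument is correct and follows essentially the same route as the paper: the symmetrized weak forms, the second-order Taylor expansion with cubic remainder, the explicit $\sigma$-averages organized through the moments $\beta_2,\beta_3,\beta_4$, the uniform-in-$\eps$ bound on the angular integral coming from the product structure $B_\eps=|u|^\gamma b_\eps$ together with (\ref{L1})--(\ref{L2}), and then passage to the limit by dominated convergence. The one genuine difference is the treatment of the near-diagonal integrability of $\iint f\fs|u|^{\gamma+2}$ when $\gamma<-2$: you invoke Hardy--Littlewood--Sobolev with $q=6/(6-|\gamma+2|)$, whereas the paper proves an elementary lemma (Lemma \ref{dvlemma4'}, in the spirit of Desvillettes--Villani) by splitting $|u|^\alpha$ into its parts on $\{|u|\le 1\}$ and $\{|u|>1\}$ and using only H\"older and Young, under the condition $\alpha p'>-6$. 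Both tools close the argument; your HLS version even covers the borderline exponent $p=6/(6-|\gamma+2|)$ exactly, where the paper's lemma as stated requires the strict inequality $\alpha p'>-6$ (at equality the local norm $\|h_1\|_{L^{p'/2}}$ is log-divergent), while the paper's lemma has the virtue of being self-contained and of simultaneously handling the positive powers $|u|^{\gamma+2},|u|^{\gamma+3}$ via the $L^1_k$ moment bound. Your emphasis on keeping the cancelled combination $u\cdot(\nabla\vphi-\nablas\vphis)$ to gain the power $|u|^{\gamma+2}$ rather than $|u|^{\gamma+1}$ is exactly the mechanism the paper exploits through its $G_1[\Bbeps]$ term.
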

\begin{proof}
We will first formally split $\int Q_{B_\eps}(f,f)\vphi\dv$ and $\int Q_L(f,f)\vphi\dv$ into several integrals, and justify the splitting at the end.
By an exchange of variables, one can check that formally,
	\begin{multline}
		\int Q_L(f,f)(v)\vphi(v)\dv\\
		= \iint f\fs |u|^\gamma \left(-2(\nabla\vphi - \nablas\vphis)\cdot u 
									+ \frac{1}{2}  |u|^2 (D^2\vphi + D_*^2\vphis):\Pi(u)\right)\dvs\dv\\
		= \frac{1}{2} \iint f\fs |u|^\gamma G_L(v,\vs)\dvs\dv,
	\end{multline}
where 
	\begin{multline}
		G_L(v,\vs)
		: = -2(\nabla\vphi - \nablas\vphis)\cdot u 
		+  \frac{1}{2}|u|^2 (D^2\vphi + D_*^2\vphis):\Pi(u).
	\end{multline}
Define
	\begin{align*}
		&G_L^1(v,\vs) = -4(\nabla\vphi - \nablas\vphis)\cdot u = -4(\partialvi\vphi - \partialvsi\vphis)u_i,\\
		& G_L^2(v,\vs) = |u|^2 (D^2\vphi + D_*^2\vphis):\Pi(u) = |u|^2(\partialvivj\vphi + \partialvsivsj\vphis)\Pi(u)_{ij}.
	\end{align*}
Similarly for the Boltzmann collision term,
	\begin{multline*}
		\int Q_{B_\eps}(f,f)(v)\vphi(v)\dv
		= \frac{1}{2}\iint f\fs |u|^{\gamma} \int_{S^2}b_\eps(\hu\cdot\sigma) (\vphi' + \vphis' - \vphi - \vphis)\dsigma\dvs\dv\\
		= \frac{1}{2}\iint f\fs G[\Bbeps](v,\vs)\dvs\dv,
	\end{multline*}
where
\begin{multline*}
		G[\Bbeps](v,\vs)
		:=  |u|^\gamma \int_0^{\pi/2}B_\eps(|u|^\gamma, \cos\theta)\int_{-\pi}^\pi  (\vphi' + \vphis' - \vphi - \vphis)\dphi\sin\theta\dtheta\\
		= \int_{S^2}B_\eps(|u|^\gamma, \hu\cdot\sigma)(\vphi' + \vphis' - \vphi - \vphis)\dsigma.
	\end{multline*}
We begin by taking the second order Taylor expansion of $\vphi' + \vphis' - \vphi -\vphis$, keeping in mind that $\vs'-\vs = -(v'-v)$:
	\begin{multline}
		(\vphi' - \vphi) + (\vphis' - \vphis) =\\ 
		= \nabla\vphi(v)\cdot(v'-v) 
		+ \frac{1}{2}\partialvivj\vphi(v)(v_i'-v_i)(v_j'-v_j)\\
		+ \frac{1}{6}\partial_{v_i v_j v_k}\vphi(\xi)(v_i'-v_i)(v_j'-v_j)(v_k'-v_k)\\
		+ \nabla\vphi(\vs)\cdot(\vs' - \vs)
		+ \frac{1}{2}\partialvivj\vphi(\vs)(v_i'-v_i)(v_j'-v_j)\\
		+ \frac{1}{6}\partial_{v_iv_jv_k}\vphi(\zeta)(v_i'-v_i)(v_j'-v_j)(v_k'-v_k)\\
		= (\nabla\vphi(v) - \nabla\vphis(\vs))\cdot(v' - v)\\
		+ \frac{1}{2}(\partialvivj\vphi(v) + \partialvsivsj\vphi(\vs))(v_i'-v_i)(v_j'-v_j)\\
		+ \frac{1}{6}(\partial_{\vi\vj\vk}\vphi(\xi)- \partial_{\vsi\vsj\vsk}\vphi(\zeta))(v_i'-v_i)(v_j'-v_j)(v_k'-v_k),
	\label{vphiexpansion}\end{multline}
where $\xi$ and $\zeta$ are convex combinations of $v,v'$ and $\vs,\vs'$ respectively.
Next, substitute this expansion into $G[\Bbeps]$:
	\begin{multline}
		G[\Bbeps](v,\vs)
		=  (\nabla\vphi(v) - \nabla\vphi(\vs))\cdot \int_0^{\pi/2} B_\eps(|u|^\gamma, \cos\theta)\int_{-\pi}^\pi(v'-v)\dphi\sin\theta\dtheta\\
		+ \frac{1}{2} (\partialvivj\vphi + \partialvsivsj\vphis)  \int_0^{\pi/2}B_\eps(|u|^\gamma, \cos\theta) \int_{-\pi}^\pi(\vi'-\vi)(\vj'-\vj)\dphi \sin\theta\dtheta\\
		+ \frac{1}{6} \int_0^{\pi/2} B_\eps(|u|^\gamma, \hu\cdot\sigma)\int_{-\pi}^\pi(\partial_{\vi\vj\vk}\vphi(\xi) - \partial_{\vsi\vsj\vsk}\vphi(\zeta))\cdot\\
		\cdot(\vi'-\vi)(\vj'-\vj)(\vk'-\vk)\dsigma
	\end{multline}
Let
	\begin{align*}
		& G_1[\Bbeps](v,\vs) : =  \int_0^{\pi/2} \Bbeps(|u|^\gamma, \hu\cdot\sigma)\int_{-\pi}^\pi (v'-v)\dphi\sin\theta\dtheta,\\
		& G_2[\Bbeps] (v,\vs) : = \frac{1}{2}\int_0^{\pi/2} \Bbeps(|u|^\gamma, \cos\theta)\int_{-\pi}^\pi (\vi'-\vi)(\vj-\vj)\dphi\sin\theta\dtheta,\\
		& G_3[\Bbeps] (v,\vs) : = \frac{1}{6} \int_0^{\pi/2}\Bbeps(|u|^\gamma, \cos\theta) \int_{-\pi}^\pi(\partial_{\vi\vj\vk}\vphi(\xi)- \partial_{\vsi\vsj\vsk}\vphi(\zeta))\cdot\\
		&\hspace {2.5in} \cdot (v_i'-v_i)(v_j'-v_j)(v_k'-v_k)\sin\theta\dtheta\dphi\\
		& \leq \frac{1}{3}\|D^3\|_{L^\infty} \int_0^{\pi/2} \Bbeps(|u|^\gamma, \cos\theta) \int_{-\pi}^\pi |v'-v|^3\dphi\sin\theta\dtheta.
	\end{align*}
Using the representations of the post-collisional velocities and of the scattering direction from Section \ref{description of equations}, it is not hard to show that
	\begin{align}
		 & \int_{-\pi}^\pi (v'-v)\dphi
		= \pi u(\cos\theta -1)
		= -2\pi u \sin^2(\theta/2), \\
		&\int_{-\pi}^\pi (\vi' - \vi)(\vj'-\vj)\dphi 
		= \frac{\pi}{2}(\cos\theta -1)^2u_iu_j
		+ \frac{\pi}{4}\Pi(u)_{ij}|u|^2\sin^2\theta \nonumber \\
		& \hspace{.5in} = \pi\sin^4(\theta/2)(2u_iu_j - |u|^2\Pi(u)_{ij})
		+ \pi |u|^2\Pi(u)_{ij} \sin^2(\theta/2),\\
		& \int_{-\pi}^\pi |v'-v|^3\dphi
		= |u|^3 \int_{-\pi}^\pi \sin^3(\theta/2)\dphi
		= 2\pi \sin^3(\theta/2).
	\end{align}
Then $G_1[\Bbeps]$ and $G_2[\Bbeps]$ become

	\begin{multline}
		G_1[\Bbeps](v,\vs)
		= -2\pi u \int_{0}^{\pi/2} B_\eps(|u|^\gamma, \cos\theta)\sin^2(\theta/2)\sin\theta\dtheta\\
		= -2\pi  u \beta_2[\Bbeps](u)
	\label{G1}\end{multline}
and
	\begin{multline}
		G_2[\Bbeps](v,\vs)
		= \frac{\pi}{2} (2u_iu_j - |u|^2\Pi(u)_{ij})\int_0^{\pi/2}B_\eps(|u|^\gamma, \cos\theta) \sin^4(\theta/2)\sin\theta\dtheta\\
		+ \frac{\pi}{2} |u|^2\Pi(u)_{ij} \int_0^{\pi/2} B_\eps(|u|^\gamma, \cos\theta)\sin^2(\theta/2)\sin\theta\dtheta\\
		=: \frac{\pi}{2} (2u_iu_j - |u|^2\Pi(u)_{ij}) \beta_4[\Bbeps](u)
		+ \frac{\pi}{2}  |u|^2 \Pi(u)_{ij} \beta_2[\Bbeps](u),
	\label{G2}\end{multline}
and $G_3[\Bbeps]$ is bounded by
	\begin{multline}
		G_3[\Bbeps](v,\vs)
		\leq  \frac{2\pi}{3}|u|^{3} \|D^4\vphi\|_{L^\infty}\int_0^{\pi/2} \Bbeps(|u|^\gamma, \cos\theta)\sin^3(\theta/2)\sin\theta\dtheta\\
		=  \frac{2\pi}{3}|u|^{3} \|D^4\vphi\|_{L^\infty} \beta_3[\Bbeps](u).
	\end{multline}
Together,
	\begin{multline}
		\int Q_{B_\eps} (f,f)(v)\vphi(v)\dv
		= \frac{1}{2} \iint f\fs ((\nabla\vphi - \nablas\vphis)\cdot G_1[\Bbeps] (v,\vs)\dvs\dv\\
		+ \frac{1}{2} \iint f\fs (\partialvivj\vphi + \partial\vsi\vsj\vphis)G_2[\Bbeps](v,\vs)\dvs\dv\\
		+ \frac{1}{2} \iint f\fs G_3[\Bbeps](v,\vs)\dvs\dv\\
		= -\pi \iint \beta_2[\Bbeps] (u) f\fs(\nabla\vphi - \nablas\vphis)\cdot u\dvs\dv\\
		+ \frac{\pi}{4}  \iint \beta_4[\Bbeps](u) f\fs (\partialvivj\vphi + \partial\vsi\vsj\vphis) (2u_iu_j - |u|^2 \Pi(u)_{ij}) \dvs\dv\\
		+ \frac{\pi}{4} \iint \beta_2[\Bbeps](u) f\fs |u|^{2}  (\partialvivj\vphi + \partial\vsi\vsj\vphis)\Pi(u)_{ij}\dvs\dv\\
		+ \frac{1}{2}\iint f\fs G_3[\Bbeps](v,\vs)\dvs\dv.
	\label{alltogether}\end{multline}	
Now, we show that the four integrals at the end of (\ref{alltogether}) are bounded. This will justify this splitting of $\int Q_{B_\eps}(f,f)\vphi$ and $\int Q_L(f,f)\vphi.$ 
First, the structure of $\Bbeps(|u|^\gamma, \hu\cdot\sigma) = |u|^\gamma, \beps(\hu\cdot\sigma)$ and assumptions (\ref{L1}), (\ref{L2}) imply that $\beta_k[\Bbeps](u) \leq A_k|u|^\gamma$ for some $K>0.$ Using this, and that $\vphi \in C_0^\infty,$
	\begin{equation}
		\iint f\fs \beta_2[\Bbeps] (u) (\nabla\vphi - \nablas\vphis)\cdot u\dvs\dv
		 \leq A_2 \|D^2\vphi\|_{L^\infty} \iint f\fs |u|^{\gamma + 2}\dvs\dv,
	\end{equation}
	\begin{multline}
		\iint\beta_4[\Bbeps](u)  f\fs (\partialvivj\vphi + \partialvsivsj\vphis)(2u_iu_j - |u|^2\Pi(u)_{ij})\dvs\dv \\
		\leq 6A_4 \|D^2\vphi\|_{L^\infty} \iint f\fs |u|^{\gamma+2}\dvs\dv,
	\end{multline}
	\begin{multline}
		 \iint\beta_2[\Bbeps] (u)   f\fs |u|^{ 2} \dvs\dv (\partialvivj\vphi + \partialvsivsj\vphi)\Pi(u)_{ij}\dvs\dv  \\
		 \hspace{2in} \leq 2A_2\|D^2\vphi\|_{L^\infty}\iint f\fs |u|^{\gamma + 2}\dvs\dv,
	\end{multline}
and
	\begin{equation}
		\iint f\fs G_3[\Bbeps]\dvs\dv
		\leq \frac{2\pi}{3}A_3\iint f\fs |u|^{\gamma+3}\dvs\dv.
	\label{G3}\end{equation}

It remains to show that $\iint f\fs |u|^{\gamma+2}, \iint f\fs |u|^{\gamma + 3}$ are finite. For this, we apply the following lemma, which was inspired by Lemma 4 from \cite{dv}:
\newtheorem{dvlemma4'}[qconvergence]{Lemma}
\begin{dvlemma4'}
Let $p>1$, $k>0$ and $\alpha \leq k$ such that $\alpha p' > -6.$ If $f\in L^1_k \cap L^p(\R^3)$ and $h(v): = |v|^{\alpha},$ then there exists $C = C(p)>0$ such that
	\begin{align}
		& \iint f(v)f(\vs)|v-\vs|^\alpha \dvs\dv
		\leq 2^k\|f\|_{L^1(\R^3)}\|f\|_{L^1_k(\R^3)} \text{ if } \alpha \geq 0,\\
		& \iint f(v)f(\vs)|v-\vs|^\alpha \dvs\dv
		\leq \|f\|_{L^1(\R^3)}^2 + C\|f\|_{L^p(\R^3)}^2  \text{ if } \alpha <0.
	\end{align}
\label{dvlemma4'}\end{dvlemma4'}
\begin{proof}
If $\alpha \geq 0,$ then we can use the convexity of $x\mapsto x^\alpha$ to get
	\begin{equation*}
		|v-\vs|^\alpha 
		\leq 2^{\alpha -1} (|v|^\alpha + |\vs|^\alpha)
		\leq 2^{k -1 } \left((1+|v|^2)^{\frac{\alpha}{2}} + (1+ |\vs|^2)^{\frac{\alpha}{2}}\right), 
	\end{equation*}
so
	\begin{equation*}
		\iint f(v) f(\vs) |v-\vs|^\alpha \dvs\dv
		\leq 2^k\|f\|_{L^1}\|f\|_{L^1_k}.
	\end{equation*}
If $\alpha < 0,$ let $h_1(v): = |v|^{\alpha}\1_{|v|\leq 1}$ and $h_2(v): = |v|^\alpha\1_{|v|>1},$ so that $h_1 + h_2 = h.$
Then
	\begin{equation*}
		\iint f(v)f(\vs)h(v-\vs)\dvs\dv
		= \int f(v) f\ast h_1(v) \dv
		+ \int f(v) f\ast h_2(v)\dv.
	\end{equation*}
By Holder's and Young's inequalities,
	\begin{align}
		& \int f f\ast h_1\dv
		\leq \|f\|_{L^p} \|f\ast h_1\|_{L^{p'}}
		\leq \|f\|_{L^p}^2 \|h_1\|_{L^{p'/2}}, \label{h1}\\
		& \int f f\ast h_2 \dv
		\leq \|f\|_{L^1}\|f\ast h_2\|_{L^\infty}
		\leq \|f\|_{L^1}^2 \|h_2\|_{L^\infty}
		= \|f\|_{L^1}^2 \label{h2}
	\end{align}	
Note that $\|h_1\|_{L^{p'/2}}$ depends only on $|S^2|$ and $p'$.
\end{proof}

Now that our steps until now have been justified, we can take the limit as $\eps\to 0$ in (\ref{alltogether}). (\ref{L1}) - (\ref{L3}) allow the second and fourth integrals to vanish, leaving us with
	\begin{multline}
		\int Q_{\Bbeps}(f,f)(v)\vphi(v)\dv
		\to
		-2\iint |u|^\gamma f\fs (\nabla\vphi- \nablas\vphis)\cdot u\dvs\dv\\
		+ \frac{1}{2} \iint |u|^\gamma f\fs (D^2\vphi + D_*^2\vphis):\Pi(u)\dvs\dv\\
		= \int Q_L(f,f)(v)\vphi(v)\dv.
	\end{multline}
\end{proof}

If, additionally,  $\feps$ is a solution to (\ref{bte}) with $B= B_\eps,$ then one can go a step further and replace $f$ in Proposition \ref{qconvergence} with $\feps:$

\newtheorem{qsolconvergence}[qconvergence]{Theorem}.
\begin{qsolconvergence}
Let $\feps \in L^p(\R^3)\cap L^1_2(\R^3),$ $p > \frac{6}{5},$ be a weak solution of (\ref{bte}) with the collision cross section $\beps$ satisfying (\ref{L1}) - (\ref{L3}), and with $0\leq \fo \in L^1_2\cap L\log L(\R^3)$. Then, for all $t>0,$ $Q_{B_\eps}(f,f) \to Q_L(f,f)$ as distributions. That is, for any $\vphi \in C_0^\infty,$
	\begin{equation}
		\lim_{\eps \longrightarrow 0}\left|\int_{\R^3} \left(\qgeps(\feps, \feps)(v,t) - Q_L(\feps, \feps)(v,t)\right)\vphi(v)\dv\right|
		= 0.
	\end{equation}
\label{qsolconvergence}
\end{qsolconvergence}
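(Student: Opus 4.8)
The plan is to bootstrap \Cref{qconvergence} by tracking where its hypotheses on $f$ were actually used, and then supplying those ingredients for the sequence $\feps$ uniformly in $\eps$. Examining the proof of \Cref{qconvergence}, the only properties of $f$ that entered were: (i) $0\le f\in L^1_2$, so that mass, momentum and energy are controlled; (ii) $f\in L^p$ with $p$ large enough that \Cref{dvlemma4'} applies to the weights $|u|^{\gamma+2}$ and $|u|^{\gamma+3}$ appearing in \eqref{alltogether}; and (iii) the pointwise limits \eqref{L1}--\eqref{L3}, which are properties of $\beps$ alone and are assumed here. So the strategy is: first, quote the standard a priori theory for weak solutions of \eqref{bte} with cross section $\beps$ and initial datum $\fo\in L^1_2\cap L\log L$ to get, for every $t>0$, conservation (\ref{conslaws}) and the entropy bound \eqref{hthm}, hence a uniform-in-$\eps$ bound $\sup_{\eps}\|\feps(t)\|_{L^1_2}<\infty$ and $\sup_\eps \mathcal H(\feps(t))\le \mathcal H(\fo)$. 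Second, note that $\feps(t)\in L^p$ with $p>\tfrac65$ is in the hypotheses, so \Cref{dvlemma4'} gives $\iint \feps\feps' |u|^{\gamma+2}\dvs\dv$ and $\iint\feps\feps'|u|^{\gamma+3}\dvs\dv$ finite; one checks the exponent condition $\alpha p'>-6$ for $\alpha=\gamma+2\ge -1$ (true since $p'>1>\tfrac{6}{6-1}$ is automatic for $\alpha\ge -1$, in fact for any $\alpha>-6$ and $p\ge1$ when $\alpha\ge0$, and for $\alpha\in(-1,0)$ one needs $p'>6$, i.e. $p<\tfrac65$ — here one should instead invoke that $|\gamma+2|\le1$ forces $\alpha\ge-1$, and handle $\alpha=\gamma+3\ge0$ by the first inequality of \Cref{dvlemma4'}). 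Third, rerun verbatim the identities \eqref{G1}--\eqref{alltogether} and the four bounds following it with $f$ replaced by $\feps$, observing that all constants $A_k$ depend only on the fixed structural assumptions \eqref{L1}--\eqref{L2} and not on $\eps$; then pass to the limit using \eqref{L1}--\eqref{L3} exactly as at the end of the proof of \Cref{qconvergence}.

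The one genuine subtlety, and the main obstacle, is that in \Cref{qconvergence} the function $f$ was fixed, whereas now $\feps$ moves with $\eps$: the limit $Q_L(\feps,\feps)\to$ something is not what is being claimed — rather we keep $\feps$ on both sides and claim the \emph{difference} $\int(Q_{\beps}(\feps,\feps)-Q_L(\feps,\feps))\vphi\,\dv\to0$. So I do not need any compactness or strong convergence of $\feps$; the point is purely that the $\eps$-dependent error terms in \eqref{alltogether}, namely the $\beta_4$-term, the second $\beta_2$-term that does not match $Q_L$, and the $G_3$ remainder, are each bounded by (a constant depending only on $\vphi$ and the structural constants) times $\iint \feps\feps'|u|^{\gamma+2}\dvs\dv + \iint\feps\feps'|u|^{\gamma+3}\dvs\dv$, and this quantity is bounded \emph{uniformly in $\eps$} by \Cref{dvlemma4'} together with the uniform $L^1_2\cap L^p$ bounds from step one. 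Then \eqref{L1}--\eqref{L3} kill the angular prefactors $\beta_4[\beps](u)\to0$, etc., and dominated convergence (with dominating function $C\,\feps\feps'(|u|^{\gamma+2}+|u|^{\gamma+3})$ — here one must be slightly careful since $\feps$ itself varies, so it is cleaner to bound $|u|^{-\gamma}\beps(\cos\theta)$ uniformly and integrate the $\feps\feps'$ factors directly, using that $\int_0^{\pi/2}\beps\sin^k(\theta/2)\sin\theta\,\dtheta\to0$ for $k\ge3$) finishes the job.

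To make the dominated-convergence step rigorous without strong convergence of $\feps$, the cleanest route is: write each error term as $\iint \feps(v)\feps(\vs)\,\big(\text{angular factor}\big)(u)\,\big(\text{polynomial in }u\text{ against }D^2\vphi\text{ or }D^4\vphi\big)\,\dvs\dv$, bound the angular factor pointwise in $u$ by $\beta_k[\beps](u)\le A_k|u|^\gamma$ uniformly in $\eps$ and also by $\beta_k[\beps](u)=|u|^\gamma\,\delta_k(\eps)$ where $\delta_k(\eps):=\int_0^{\pi/2}\beps(\cos\theta)\sin^k(\theta/2)\sin\theta\,\dtheta\to0$ for $k>2$ by \eqref{L2}. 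Hence the $k=4$ and $k=3$ error terms are bounded by $\delta_k(\eps)\cdot C_\vphi\cdot\big(\iint\feps\feps'|u|^{\gamma+2}+\iint\feps\feps'|u|^{\gamma+3}\big)$, and the inner integral is $\le$ a constant independent of $\eps$ by \Cref{dvlemma4'} and step one; since $\delta_k(\eps)\to0$, these terms vanish. The surviving $\beta_2$ terms converge because $\beta_2[\beps](u)\to\tfrac{2}{\pi}|u|^\gamma$ pointwise by \eqref{L1} and are dominated by $A_2|u|^\gamma$; here I still need to move the limit inside $\iint\feps\feps'(\dots)$, and since $\feps$ varies I instead observe that $\big|\int(Q_{\beps}(\feps,\feps)-Q_L(\feps,\feps))\vphi\big|$ is, after cancellation, exactly the sum of the three error terms above plus $\pi\iint\big(\beta_2[\beps](u)-\tfrac{2}{\pi}|u|^\gamma\big)\feps\feps'\,(\dots)$, and this last term is bounded by $\sup_{|u|}\big||u|^{-\gamma}(\beta_2[\beps](u)-\tfrac2\pi|u|^\gamma)\big|$ — which tends to $0$ uniformly by \eqref{L1} on $\{|u|\ge\theta_0\}$-type reasoning, with the small-$|u|$ part absorbed into $\iint\feps\feps'|u|^{\gamma+2}$ and a Cauchy–Schwarz/dominated-convergence argument using only the uniform bounds — times the uniformly bounded integral $\iint\feps\feps'|u|^{\gamma+2}$. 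Assembling these four estimates and letting $\eps\to0$ gives the claim.
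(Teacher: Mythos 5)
Your proposal is correct and takes essentially the same route as the paper, which disposes of this theorem in one line by observing that the argument of Proposition \ref{qconvergence} runs verbatim with $f$ replaced by $\feps$ — exactly your plan of rerunning \eqref{G1}--\eqref{alltogether} and the four bounds with constants depending only on \eqref{L1}--\eqref{L3} and $\vphi$. Your added care about the $\eps$-dependence of $\feps$ (uniform $L^1_2$ control from conservation, and the need for an $\eps$-uniform $L^p$ bound so that $\delta_k(\eps)$ times the $\iint\feps\feps_*|u|^{\gamma+2}$-type integrals vanishes) is precisely the point the paper glosses over here and supplies later, for its specific kernel, via the uniform $L^p$ propagation theorem.
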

The proof of this theorem is almost identical to the proof the previous proposition.

	
\subsection{Weak and weak-H formulations.}
	
Let $\vphi = \vphi(v,t) \in C^1(\R^+, C_0^\infty(\R^3))$ and consider the weak form of $Q_B$ for a collision kernel $B(|u|, \hu\cdot\sigma) = |u|^\gamma b(\hu\cdot\sigma),$ with $b$ even, nonnegative and symmetric about $\theta = \pi/2$. Then $B$ is invariant under the change of variables $(v,\vs)\leftrightarrow (\vs,v)$ and $(v,\vs)\leftrightarrow (v',\vs').$ Furthermore, the Jacobian of these transformations has an absolute value of one, therefore we may write the weak form of $Q_B$ as
	\begin{multline}
		\int_0^t\int_{\R^3} Q_B(f,f)(v,s)\vphi(v,s)\dv\ds\\
		= \int_0^t \iint_{\R^3\times\R^3} \int_{S^2} B(|u|^\gamma, \hu\cdot\sigma)(f'\fs'-f\fs) \vphi(v,s)\dsigma\dvs\dv\ds\\
		\hspace{-1.75in} = \frac{1}{4}\int_0^t \iint_{\R^3\times\R^3}\int_{S^2} B(|u|^\gamma, \hu\cdot\sigma)\cdot\\
		\cdot (f'\fs' - f\fs)(\vphi + \vphis - \vphi' - \vphis')\dsigma\dvs\dv\ds.
	\label{weakhqb}\end{multline}
In \cite{villani} the author ensures that the integral on right hand side of (\ref{weakhqb}) converges by making the extra assumption that solutions of the Boltzmann equation have \textit{finite entropy decay,} that is,
	\begin{multline}
		0 
		\leq -\frac{d}{dt}\mathcal{H}(t)
		= - \frac{d}{dt} \int f(v,t)\log f(v,t)\dv \\
		 \hspace{-1.5in} = \frac{1}{4} \iint_{\R^3\times\R^3} \int_{S^2}B(u,\hu\cdot\sigma)\cdot \left(f'\fs' - f\fs \right)\cdot\\
		\cdot \left(\log f'\fs' - \log f\fs\right)\dsigma\dvs\dv 
		< \infty.
	\label{entropycondition}\end{multline}
Because of the assumption on the entropy, the variational formulation of the Boltzmann equation (\ref{bte}) with (\ref{weakhqb}) representing $\int Q_B\vphi$ is called the \textit{weak-H form}, and its solutions are consequently the weak-H, or H solutions.

However, if the singularities of $B(|u|^\gamma, \hu\cdot\sigma)$ are mild enough - that is, if $\gamma \geq -2$ and if (\ref{bsingularity}) holds - then the right hand side of (\ref{weakhqb}) is well defined even without the assumption (\ref{entropycondition}). In fact, in this case we can even go further by splitting $Q_B$ into its \textit{gain and loss parts,} $Q_B^+$ and $Q_B^-$ (which are still well defined):
	\begin{multline*}
	 Q_B(f,f) = \iint_{\R^3\times S^2} B(|u|^\gamma, \hu\cdot\sigma)(f'\fs' - f\fs)\dsigma\dvs\\
	 = \iint_{\R^3\times S^2} B(|u|^\gamma, \hu\cdot\sigma)f'\fs'\dsigma\dvs
	 - \iint_{\R^3\times S^2} B(|u|^\gamma, \hu\cdot\sigma) f\fs\dsigma\dvs\\
	 =: Q_B^+(f,f) - Q_B^-(f,f).
	\end{multline*}
Then we can break up $\int Q_B\vphi$ into
	\begin{multline}
		\int_0^t \int_{\R^3} Q_B(f,f)(v,s)\vphi(v,s)\dv\ds\\
		= \frac{1}{4}\int_0^t \iint \int_{S^2}B(|u|, \hu\cdot\sigma) f'\fs' (\vphi + \vphis - \vphi' -\vphis')\dsigma\dvs\dv\ds\\
		-\frac{1}{4}\int_0^t \iint_{\R^3\times\R^3} \int_{S^2}B(|u|, \hu\cdot\sigma) f\fs (\vphi + \vphis - \vphi' -\vphis')\dsigma\dvs\dv\ds\\
		= \frac{1}{2}\int_0^t \iint_{\R^3\times\R^3}  f\fs \int_{S^2}B(|u|, \hu\cdot\sigma)(\vphi' + \vphis' - \vphi -\vphis)\dsigma\dvs\dv\ds,
	\label{weakqb}\end{multline}
and the right hand side of (\ref{weakqb}) is still well defined. Because no additional assumption on the entropy is required in this case, this form of $\int Q_B\vphi$ is stronger, and is known simply as the \textit{weak form.} The weak formulation of (\ref{bte}) would then have (\ref{weakqb}) on its left hand side, and solutions to this problem are called \textit{weak solutions.}

The precise definitions of weak and weak-H solutions of the Boltzmann equation are defined by Villani in \cite{villani} as follows:
\newtheorem{defweaksol}[qconvergence]{Definition}
	\begin{defweaksol}
		A function $f(v,t)$ is said to be a weak solution of the Boltzmann equation with initial data $0\leq f_0 \in L^1_2(\R^3)$ if the following conditions are satisfied:
		\begin{enumerate}[(i)]
			\item 
			\begin{align*}
				& f\geq 0, 
				f\in\mathcal{C}(\R^+, \mathcal{D}'),
			 	f\in L^1([0,T], L^1_{2+\gamma}),\\
				& \forall t\geq 0, f(\cdot, t) \in L^1_2(\R^3)\cap L\log L(\R^3),
			\end{align*}
				
			\item
			\begin{equation*}
				f(v,0) = f_0(v) \text{ for a.e. } v\in \R^3
			\end{equation*}
			
			\item 	
			\begin{align}
			&\forall t \geq 0,
				\int f(v,t) \left(\begin{array}{c} 1\\ v \\ |v|^2 \end{array}\right)\dv
				= \int f_0(v) \left(\begin{array}{c} 1\\ v\\ |v|^2 \end{array}\right)\dv \label{conservation}\\
			& \int f(v,t)\log f(v,t)\dv \leq \int f_0(v)\log f_0(v)\dv
			\label{entropy condition}
			\end{align}	
			
			\item $\forall \vphi = \vphi(v,t) \in C^1(\R^+, C_0^\infty(\R^3)), \forall t>0,$
			\begin{multline}
				\int f(v,t)\vphi(v,t)\dv - \int f_0(v)\vphi(v, 0)\dv
				- \int_0^t\int f(v,s)\partial_s\vphi(v,s)\dv\ds\\
				\hspace{-1.5in} = \frac{1}{2}\int_0^t\iint f(v,s)f(\vs,s) |u|^\gamma \cdot\\
				\cdot \int_{S^2}b(\hu\cdot\sigma)\left(\vphi' + \vphis' - \vphi - \vphis\right)\dsigma\dvs\dv.
			\label{weakQB}\end{multline}
		\end{enumerate}
	\end{defweaksol}

\newtheorem{defhsol}[qconvergence]{Definition}
	\begin{defhsol}
		A function $f(v,t)$ is an H-solution of the Boltzmann equation if it satisfies all of the conditions (i)-(iii) above, and item (iv) with the right hand side of (\ref{weakQB}) replaced by (\ref{weakhqb}).
	\end{defhsol}

\newtheorem{entropyremark}[qconvergence]{Remark}
\begin{entropyremark}
The difference between weak solutions and H solutions lies {\bf\textit{only}} in the interpretation of $\int Q_B\vphi.$ The entropy assumption, (\ref{entropycondition}), is only a condition under which (\ref{weakhqb}) is well defined. If there is another way to ensure the finiteness of the right hand side of (\ref{weakhqb}) (for example if $B(|u|^\gamma, \hu\cdot\sigma)$ has a special, non-traditional structure), then (\ref{entropycondition}) is not needed. Whether the entropy assumption is needed or not, solutions to the variational problem are called H-solutions as long as $\int Q_B\vphi$ is defined as in (\ref{weakhqb}), and they are called weak solutions if $\int Q_B\vphi$ is defined as in (\ref{weakqb}).
\label{entropyremark}\end{entropyremark}

A similar definition of weak and weak-H solutions can be derived for the Landau equation. One can check in \cite{villani} that the weak-H form of $\int Q_L\vphi$ is
	\begin{multline}
		\int Q_L(f,f)\vphi(v)\dv
		= - \int \sqrt{f\fs|u|^{\gamma+2}}(\nabla\vphi - \nablas\vphis)^T \Pi(u)\cdot \\
		\cdot \left((\nabla - \nablas)\sqrt{f\fs |u|^{\gamma +2}}\right)\dvs\dv,
	\label{weakhql}\end{multline}
which is well defined given the assumption of finite entropy decay of the solutions:
	\begin{multline}
		0
		\leq -\frac{d}{dt}\int f(t,v) \log f(t,v)\dv\\
		 = \int f\fs |u|^{\gamma+2} \left(\frac{\nabla f}{f} - \frac{\nablas\fs}{\fs}\right)^T\Pi(u)
		\left(\frac{\nabla f}{f} - \frac{\nablas\fs}{\fs}\right)\dvs\dv
		< \infty.
	\label{landauentropycondition}\end{multline}
For $\gamma \geq -2$, similar to $Q_B$ one can split $Q_L$ into two integrals to further expand (\ref{weakhql}):
	\begin{multline}
		\int Q_L(f,f)\vphi\dv
		= -\frac{1}{2}\iint f\fs |u|^{\gamma}(\nabla\vphi - \nablas\vphis)\cdot u\dvs\dv\\
		+ 2\iint f\fs |u|^{\gamma + 2} \Pi(u):(D^2\vphi + D_*^2\vphis)\dvs\dv
	\label{weakql}\end{multline}
Now we define a weak solution for the Landau equation:
\newtheorem{defweaklandausol}[qconvergence]{Definition}
	\begin{defweaklandausol}
		A function $f(v,t)$ is said to be a weak solution of the Landau equation with initial data $0\leq f_0 \in L^1_2(\R^3)$ if the following conditions are satisfied:
		\begin{enumerate}[(i)]
			\item 
			\begin{align*}
				& f\geq 0, 
				f\in\mathcal{C}(\R^+, \mathcal{D}'),
			 	f\in L^1([0,T], L^1_{2+\gamma}),\\
				& \forall t\geq 0, f(\cdot, t) \in L^1_2(\R^3)\cap L\log L(\R^3),
			\end{align*}	
			\item
			\begin{equation*}
				f(v,0) = f_0(v) \text{ for a.e. } v\in \R^3
			\end{equation*}
			
			\item 	
			\begin{align}
			&\forall t \geq 0,
				\int f(v,t) \left(\begin{array}{c} 1\\ v \\ |v|^2 \end{array}\right)\dv
				= \int f_0(v) \left(\begin{array}{c} 1\\ v\\ |v|^2 \end{array}\right)\dv \label{conservation}\\
			& \int f(v,t)\log f(v,t)\dv \leq \int f_0(v)\log f_0(v)\dv
			\label{entropy condition}
			\end{align}	
			
			\item $\forall \vphi = \vphi(v,t) \in C^1(\R^+, C_0^\infty(\R^3)), \forall t>0,$
			\begin{multline}
				\int f(v,t)\vphi(v,t)\dv - \int f_0(v)\vphi(v, 0)\dv
				- \int_0^t\int f(v,s)\partial_s\vphi(v,s)\dv\ds\\
				= \frac{1}{2}\int_0^t\iint f(v,s)f(\vs,s) |u|^{-1}(D^2\vphi(v,s) + D_*^2\vphi(\vs,s)):\Pi(u) \dvs\dv\\
				- 2 \int_0^t \iint f\fs|u|^{-3}(\nabla\vphi(v,s) - \nablas\vphi(\vs, s))\cdot u\dvs\dv.
			\label{weakQB}\end{multline}
		\end{enumerate}
	\end{defweaklandausol}
By now, thanks to Desvillettes in \cite{dv}, we know that H-solutions of the Landau equation for all soft potentials (even if $\gamma = -3$) are weak solutions, so $\int Q_L(f,f)\vphi$ can be defined as in (\ref{weakql}), and (\ref{weakhql}) is not necessary. Desvillettes shows that not only does (\ref{landauentropycondition}) ensure that the right hand side of (\ref{weakhqb}) well defined, but that the right hand side of (\ref{weakqb}) is finite too. However, it is important to note that the weak solutions of Desvillettes for very soft potentials require extra integrability, which is obtained by using (\ref{landauentropycondition}), so the entropy assumption is still needed.\\

	
\section{An angle-potential concentrated collision kernel}

\subsection{Description of the kernel}	

Recall that for soft potentials the traditional Boltzmann collision operator, $B(|u|^\gamma, \hu\cdot\sigma) = |u|^\gamma b(\hu\cdot\sigma)$ has two singularities - one when $u=0$, the other when $\hu\cdot\sigma$ =1. However, classical truncations of the kernel only take care of the singularity in $b(\cos\theta),$ not in $|u|^\gamma,$ so in fact the collision operator $Q_{B_\eps}$ is still a singular integral. This is a problem when looking for weak solutions, partly because it does not allow us to make $L^p$ estimates on $Q_B^+$ and $Q_B^-.$ We are therefore tempted to truncate the collision kernel in a way that also controls its singularity at $u=0,$ while still sending $Q_{B_\eps}(f,f)$ to $Q_L(f,f)$ in the grazing collisions limit.

We present here a collision kernel, originally from \cite{bp}, that links the two singularities existing in $B$:
	\begin{equation}
		\geps(|u|^\gamma, \hu\cdot\sigma) 
		= \geps(|u|^\gamma, \mu)
		= \frac{4}{\pi\eps}\delta_0(1-\mu - \min\{2, \eps|u|^{\gamma}\}),
	\label{geps}\end{equation}
where $\mu: = \cos\theta$.
Notice that, unlike the standard $B(|u|^\gamma, \hu\cdot\sigma)$, this collision kernel does not separate its two variables, $|u|^\gamma$ and $\hu\cdot\sigma.$ 


We can check that, for fixed $u \neq 0$, $\geps$ satisfies properties (\ref{L1}) and (\ref{L2}) from before. Indeed, letting $\meps(x): = \min\{2, \eps x^{\gamma}\}$ and $\mu_{\eps}(x): = 1-\meps(x)$ we have
	\begin{multline}
		\beta_2[\geps] (u)
		= \int_0^{\frac{\pi}{2}} \geps(|u|^\gamma, \cos\theta) \sin^2(\theta/2)\sin\theta\dtheta
		= \frac{1}{2}\int_0^{1} \geps(|u|^\gamma,\mu)(1-\mu)\dmu\\
		= \frac{2}{\pi\eps}(1-\mu_\eps(|u|))\1_{\eps|u|^\gamma \leq 1}
		=  \frac{2}{\pi\eps}\meps(|u|)\1_{\eps|u|^\gamma \leq 1}
		= \frac{2}{\pi}|u|^\gamma \1_{\eps|u|^\gamma \leq 1}.
	\end{multline}
Similarly, for $k>2$ and a fixed $u\neq 0,$
	\begin{multline}
		\beta_k[\geps](u)
		= \int_0^{\frac{\pi}{2}} \geps(|u|^\gamma,\mu) \sin^k(\theta/2)\sin\theta\dtheta\\
		= \int_0^1 \geps(|u|^\gamma, \mu) \left(\frac{1}{2}(1-\mu)\right)^{\frac{k}{2}}\dmu
		= 2^{-\frac{k}{2}}\frac{4}{\pi\eps}(1-\mu_\eps(|u|))^{\frac{k}{2}}\1_{\eps|u|^\gamma \leq 1}\\
		= 2^{-\frac{k}{2}}\frac{4}{\pi\eps}\meps(|u|)^{\frac{k}{2}}\1_{\eps|u|^\gamma \leq 1} 
		= \frac{2^{2-\frac{k}{2}}}{\pi} \eps^{\frac{k}{2} -1}|u|^{\gamma k/2} \1_{\eps|u|^\gamma \leq 1} \\
		\longrightarrow 0
		\text{ as } \eps\longrightarrow 0,
	\end{multline}
thus (\ref{L1}) and (\ref{L2}) are satisfied. And in some sense, $\geps$ satisfies (\ref{L3}) as well, because the mass of $\geps$ is concentrated at just one point which corresponds to $\theta$ being very small and $\eps|u|^\gamma \leq 2.$ 

The Boltzmann operator with this new cross section is written as
	\begin{multline}
		\qgeps(f,f)(v,t)
		= \int_{\R^3} \int_{S^2}(f(v',t)f(\vs',t) - f(v,t)f(\vs,t))\geps(|u|^\gamma,\hu\cdot\sigma)\dsigma\dvs\\
		= \int_{R^3} \int_{S^2} f'\fs' \geps(|u|^\gamma, \hu\cdot\sigma)\dsigma\domega\dvs
		- \frac{8}{\eps}f(v,t),
	\end{multline}
and the corresponding Boltzmann equation is
	\begin{equation}
		\begin{dcases}
			& \partial_t f + \frac{8}{\eps}f = \int_{R^3} \int_{S^2} f'\fs' \geps(|u|^\gamma, \hu\cdot\sigma)\dsigma\dvs\\
			& f(v,0) = \fo(v).
		\end{dcases}
	\label{bteqgeps}\end{equation}
In view of Proposition \ref{qconvergence}, it would be reasonable to expect for there to be a grazing collisions limit, and in fact we will show that this is true.

	
\subsection{Connection with $Q_L$}
\newtheorem{convergence}[qconvergence]{Theorem}
\begin{convergence}
	Let $\feps \in L^p(\R^3),$ $p>\max\{\frac{6}{8+ \gamma}, 1\}$ be a weak solution of (\ref{bte}) with the cross section $\geps$ defined as in (\ref{geps}). Then for all time, $|\qgeps(\feps,\feps) - Q_L(\feps,\feps)| \longrightarrow 0$ in the distributional sense as $\eps\to 0$. That is, for any $\vphi \in C_0^\infty(\R^3),$
		\begin{equation}
			\lim_{\eps \longrightarrow 0}\left|\int_{\R^3} \left(\qgeps(\feps, \feps)(v,t) - Q_L(\feps, \feps)(v,t)\right)\vphi(v)\dv\right|
			= 0.
		\end{equation}
\label{convergence}\end{convergence}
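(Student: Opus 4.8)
The plan is to reuse, almost verbatim, the weak-form and second-order Taylor-expansion computation from the proof of Proposition~\ref{qconvergence}, only with $B_\eps$ replaced by $\geps$, and then to replace the pointwise dominated-convergence step by a \emph{quantitative} one, because here $\feps$ itself varies with $\eps$. Concretely, I would first justify the decomposition leading to (\ref{alltogether}) for the kernel $\geps$: since $\geps(|u|^\gamma,\cdot)$ is a finite positive measure on $S^2$ and $\vphi\in C_0^\infty$, the inner integral $\int_{S^2}\geps\,(\vphi'+\vphis'-\vphi-\vphis)\dsigma$ is a bounded continuous function of $(v,\vs)$, so the expansion (\ref{vphiexpansion}) and the splitting into the $\beta_2$-, $\beta_4$- and $G_3$-pieces go through unchanged. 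Inserting the explicit values already recorded in the excerpt,
\[
\beta_2[\geps](u)=\tfrac2\pi|u|^\gamma\1_{\eps|u|^\gamma\le1},\qquad
\beta_4[\geps](u)=\tfrac1\pi\eps|u|^{2\gamma}\1_{\eps|u|^\gamma\le1},\qquad
\beta_3[\geps](u)=\tfrac{\sqrt2}{\pi}\eps^{1/2}|u|^{3\gamma/2}\1_{\eps|u|^\gamma\le1},
\]
and subtracting the weak form (\ref{weakql}) of $\int Q_L(\feps,\feps)\vphi$ --- whose two terms are precisely the $\eps\to0$ limits of the two $\beta_2$-pieces --- I would write
\[
\int\big(\qgeps(\feps,\feps)-Q_L(\feps,\feps)\big)\vphi
= I_\eps^{(1)}+I_\eps^{(2)}+I_\eps^{(3)},
\]
where $I_\eps^{(1)}$ is the pair of $\beta_2$-pieces minus their limits, $I_\eps^{(2)}$ is the $\beta_4$-piece, and $I_\eps^{(3)}$ is the $G_3$-remainder.

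The heart of the proof is then a single uniform estimate. Bounding the test-function factors by Taylor's theorem ($|(\nabla\vphi-\nablas\vphis)\cdot u|\le\|D^2\vphi\|_{L^\infty}|u|^2$, $|v'-v|^3=|u|^3\sin^3(\theta/2)$, and $|\Pi(u)|\le1$) leaves, for each piece, an integral $\iint\feps(\feps)_*\,w_\eps(u)\,|u|^{\gamma+2}\dvs\dv$ (plus, for $I_\eps^{(3)}$, one with $|u|^{\gamma+3}$), with $0\le w_\eps\le1$. The key structural fact is that for every $\eta\in(0,1]$ one has $w_\eps(u)\le \eta^{c}+\1_{|u|<(\eps/\eta)^{1/|\gamma|}}$ for a fixed $c>0$: for $I_\eps^{(1)}$ because $\tfrac2\pi|u|^\gamma-\beta_2[\geps]=\tfrac2\pi|u|^\gamma\1_{|u|<\eps^{1/|\gamma|}}$, and for $I_\eps^{(2)},I_\eps^{(3)}$ after writing $\eps|u|^{2\gamma}=(\eps|u|^\gamma)|u|^\gamma$, $\eps^{1/2}|u|^{3\gamma/2}=(\eps|u|^\gamma)^{1/2}|u|^\gamma$ and splitting $\{\eps|u|^\gamma\le\eta\}\cup\{\eta<\eps|u|^\gamma\le1\}$ (the second set forcing $|u|<(\eps/\eta)^{1/|\gamma|}$). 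Combined with $|u|^{\gamma+3}\1_{|u|<\rho}\le\rho^{\gamma+3}$ and with the convolution bound from the proof of Lemma~\ref{dvlemma4'},
\[
\iint_{|u|<\rho}\feps(\feps)_*|u|^{\gamma+2}\dvs\dv
\le \|\feps\|_{L^p}^2\,\big\||\cdot|^{\gamma+2}\1_{B_\rho}\big\|_{L^{p'/2}}
\;\xrightarrow[\rho\to0]{}\;0,
\]
which is finite (and $o(1)$) exactly because $(\gamma+2)\tfrac{p'}{2}>-3\iff p>\tfrac6{8+\gamma}$, one obtains $\limsup_{\eps\to0}|I_\eps^{(j)}|\le C\,\eta^{c}\big(\|\feps\|_{L^1_2}+\|\feps\|_{L^p}\big)^2$ for every $\eta$, hence $0$. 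Conservation of mass and energy gives $\|\feps\|_{L^1_2}=\|\fo\|_{L^1_2}$, and I would use the uniform-in-$\eps$ $L^p$ bound for solutions of (\ref{bteqgeps}) to keep the constant $\eps$-independent; the finiteness of $\iint\feps(\feps)_*|u|^{\gamma+2}$ and $\iint\feps(\feps)_*|u|^{\gamma+3}$ is again Lemma~\ref{dvlemma4'}.

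The main obstacle, and the only real departure from Proposition~\ref{qconvergence}, is exactly the $\eps$-dependence of $\feps$: one cannot apply dominated convergence inside the velocity integral, so the pointwise statements $\beta_4[\geps]\to0$ and $\beta_2[\geps]\to\tfrac2\pi|u|^\gamma$ must be upgraded to the quantitative ``$\eta$-splitting'' above. This is precisely what makes the threshold $p>\tfrac6{8+\gamma}$ unavoidable: it is the borderline at which $|u|^{\gamma+2}$ is locally in $L^{p'/2}$, i.e.\ at which the localized moment $\iint_{|u|<\rho}\feps(\feps)_*|u|^{\gamma+2}$ is simultaneously finite and $o(1)$ uniformly in $\eps$. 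A secondary, purely bookkeeping point is verifying that the weak-form algebra is legitimate for the distributional kernel $\geps$, which is immediate since $\geps$ is a finite measure and $\vphi$ is smooth with compact support. Everything else is a line-by-line transcription of the Proposition~\ref{qconvergence} computation with the above $\beta_k[\geps]$ substituted in.
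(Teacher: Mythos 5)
Your proposal is correct, and its skeleton coincides with the paper's proof: the same weak-form Taylor decomposition into $\beta_2$-, $\beta_4$- and cubic-remainder pieces, the same explicit values $\beta_k[\geps](u)=\tfrac{2^{2-k/2}}{\pi}\,\eps^{k/2-1}|u|^{\gamma k/2}\1_{\eps|u|^\gamma\le1}$, and the same reliance on the H\"older--Young bound behind Lemma \ref{dvlemma4'}, with the threshold $p>\tfrac{6}{8+\gamma}$ entering for the same reason (integrability of $|u|^{\gamma+2}$, resp.\ $|u|^{P\gamma+2}$, near $u=0$). The genuine difference is the mechanism by which the error terms are killed. The paper bounds the $\beta_4$-piece by the power interpolation $(\eps|u|^\gamma)\1_{\eps|u|^\gamma\le1}\le(\eps|u|^\gamma)^{P}$ with $1<P<\min\{\tfrac{8p-6}{|\gamma|p},2\}$, obtaining the explicit rate $\eps^{P-1}$ times the finite moment $\iint \feps\,(\feps)_*|u|^{P\gamma+2}$ (and $\sqrt{\eps}$ times $\iint \feps\,(\feps)_*|u|^{\frac{3\gamma}{2}+3}$ for the cubic piece), and then simply asserts that the two $\beta_2$-pieces, which are the Landau terms restricted to $\{\eps|u|^\gamma\le1\}$, converge to the unrestricted integrals, citing Proposition \ref{qconvergence}. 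You instead use the $\eta$-splitting together with the small-ball estimate $\iint_{|u|<\rho}\feps\,(\feps)_*|u|^{\gamma+2}\le\|\feps\|_{L^p}^2\,\bigl\||\cdot|^{\gamma+2}\1_{B_\rho}\bigr\|_{L^{p'/2}}\to0$, which buys a uniform-in-$\eps$ justification of precisely the step the paper leaves implicit: the removed region $\{|u|<\eps^{1/|\gamma|}\}$ contributes $o(1)$ even though the solution family $\feps$ changes with $\eps$ (dominated-convergence-style passage to the limit is literally justified only for a fixed $f$). The price is the same in both arguments but is made explicit in yours: since all constants involve $\|\feps\|_{L^p}$ and $\|\feps\|_{L^1_2}$, one needs these bounds uniformly in $\eps$, which you obtain from conservation of mass and energy and the uniform $L^p$ bound of Theorem \ref{integrability} (hence implicitly $\fo\in L^1_2\cap L^p\cap L\log L$, a hypothesis the statement of Theorem \ref{convergence} does not restate but the paper uses elsewhere); note that the paper's own $\eps^{P-1}$ and $\sqrt{\eps}$ rates also silently rely on such uniform control to conclude. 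One small bookkeeping point: for $\gamma=-3$ the crude bound $|u|^{\gamma+3}\1_{|u|<\rho}\le\rho^{\gamma+3}$ degenerates, but your convolution bound applied to $|\cdot|^{\gamma+3}\1_{B_\rho}$ still gives $o(1)$ since $(\gamma+3)\tfrac{p'}{2}>-3$ follows from $(\gamma+2)\tfrac{p'}{2}>-3$, so your argument closes in that endpoint case as well.
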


\begin{proof}
We need to compute $G_1[|u|^{-\gamma}\geps], G_2[|u|^{-\gamma}\geps]$ and $G_3[|u|^{-\gamma}\geps].$ Let $\meps(|u|): = \min\{\eps|u|^\gamma, 2\}$ and $\mu_\eps(|u|): = 1- \meps(|u|)$ denote the mass of $\delta_0(1-\mu - \meps(|u|).$ Using integration by parts with the Dirac mass $\delta_0$ and recalling (\ref{G1}), (\ref{G2}), we have:
	\begin{multline}
		G_1[\geps](v,\vs)
		= -2\pi u \int_0^1 \geps(|u|^\gamma, \hu\cdot\sigma)\sin^2(\theta/2)\dmu\\
		= -2\pi u \beta_2[\geps](u),
	\end{multline}
	\begin{multline}
		G_2[\geps] (v,\vs)
		= \frac{1}{2} \pi (2u_iu_j - |u|^2\Pi(u)_{ij})\int_0^{\pi/2}\geps(\cos\theta) \sin^4(\theta/2)\sin\theta\dtheta\\
		+ \frac{1}{2} \pi |u|^{2} \Pi(u)_{ij} \int_0^{\pi/2} \geps(|u|^\gamma, \cos\theta)\sin^2(\theta/2)\sin\theta\dtheta\\
		= \frac{\pi}{2} (2u_iu_j - |u|^2\Pi(u)_{ij})\beta_4[\geps](u)
		+ \frac{\pi}{2} |u|^{2} \Pi(u)_{ij} \beta_2[\geps](u)
	\end{multline}
and
	\begin{multline} 
		G_3[\geps](v,\vs)
		\leq \frac{1}{3}\|D^3\vphi\|_{L^\infty}\int_{-\pi}^\pi \int_0^1 \geps(|u|^\gamma, \mu) |v'-v|^3\dmu\dphi\\
		= \frac{2\pi}{3}|u|^3\|D^3\vphi\|_{L^\infty}\int_0^1 \geps(|u|^\gamma, \mu) \sin^3(\theta/2)\dmu\\
		=  \frac{8}{3\eps}|u|^3\|D^3\vphi\|_{L^\infty} \beta_3[\geps](u).
	\end{multline}

All together, 
	\begin{multline}
		\int \qgeps(f,f)\vphi\dv
		= \frac{1}{2} \iint f\fs (\nabla\vphi - \nablas\vphis)\cdot G_1[\geps](v,\vs)\dvs\dv\\
		+ \frac{1}{2} \iint f\fs (\partialvivj\vphi + \partialvsivsj\vphis)G_2[\geps](v,\vs)\dvs\dv\\
		+ \frac{1}{2} \iint f\fs G_3[\geps](v,\vs)\dvs\dv\\
		= -\pi \iint  f\fs \beta_2[\geps](u) (\nabla\vphi - \nablas\vphis)\cdot u\dvs\dv\\
		+ \frac{\pi}{4} \iint \beta_4[\geps](u) f\fs (\partialvivj\vphi + \partialvsivsj\vphis)(2u_iu_j - |u|^2 \Pi(u)_{ij})\dvs\dv\\
		+ \frac{\pi}{4} \iint \beta_2[\geps](u) f\fs (\partialvivj\vphi + \partialvsivsj\vphis) |u|^2 \Pi(u)_{ij}\dvs\dv\\
		+ \frac {1}{2} \iint f\fs G_3[\geps](v,\vs)\dvs\dv
		=: I_1 + I_2 + I_3 + I_4,
	\end{multline}
with $I_1 - I_4$ defined accordingly.  

Now we show that $I_1 - I_4$ are well defined, and send $\eps \to 0.$
	\begin{multline}
		I_1
		= -\pi \iint  f\fs \beta_2[\geps](u) (\nabla\vphi - \nablas\vphis)\cdot u\dvs\dv\\
		= -2\iint_{|u|^\gamma \leq \eps^{-1}}f\fs |u|^\gamma (\nabla\vphi - \nablas\vphis)\cdot u\dvs\dv\\
		\longrightarrow 
		-2\iint f\fs |u|^\gamma (\nabla\vphi - \nablas\vphis)\cdot u\dvs\dv
		= G_L^1(v,\vs).
	\end{multline}
We already showed that this integral converges in Proposition \ref{qconvergence}.
 For $I_2,$ let $1 < P <  \min\left\{\frac{8p-6}{|\gamma|p}, 2\right\}.$ Then
	\begin{multline}
		I_2
		= \frac{\pi}{4} \iint \beta_4[\geps](u) f\fs (\partialvivj\vphi + \partialvsivsj\vphis)(2u_iu_j - |u|^2 \Pi(u)_{ij})\dvs\dv\\
		\leq \frac{1}{4\eps}\iint_{\eps|u|^\gamma\leq 1} f\fs (\eps|u|^\gamma)^P
		|u|^2(\partialvivj\vphi + \partialvsivsj\vphis)(2 - \Pi(u)_{ij})\dvs\dv\\
		\leq \frac{3}{4}\eps^{P-1}\|D^2\vphi\|_{L^\infty} \iint f\fs |u|^{P\gamma + 2}\dvs\dv
		\to 0
	\end{multline}
as $\eps \to 0.$ Note also that, by the construction of $P$ and by Lemma \ref{dvlemma4'}, $I_2 < \infty.$ 
	\begin{multline}
		I_3
		= \frac{\pi}{4} \iint \beta_2[\geps](u) f\fs (\partialvivj\vphi + \partialvsivsj\vphis) |u|^2\Pi(u)_{ij}\dvs\dv\\
		= \frac{1}{2} \iint_{\eps|u|^\gamma \leq 1} |u|^{\gamma +2} f\fs (D^2\vphi + D_*^2\vphis):\Pi(u)\dvs\dv\\
		\longrightarrow \frac{1}{2} \iint |u|^\gamma f\fs (D^2\vphi + D_*^2\vphis):\Pi(u)\dvs\dv
		= G_L^2(v,\vs),
	\end{multline}
and we already showed this integral is finite in Proposition \ref{qconvergence}.
Finally, 
	\begin{multline}
		I_4
		= \frac {1}{2} \iint f\fs G_3[\geps](v,\vs)\dvs\dv\\
		\leq \frac{1}{6}|u|^3\|D^3\|_{L^\infty} \beta_3[\geps](u)
		\leq \frac{4}{3\eps}2^{2-\frac{3}{2}}\eps^{\frac{3}{2}}\|D^3\vphi\|_{L^\infty}\iint_{\eps|u|^\gamma \leq 1} f\fs |u|^{\frac{3\gamma}{2} +3}\dvs\dv\\
		= \frac{\sqrt\eps}{3}2^{4-\frac{3}{2}}\|D^3\vphi\|_{L^\infty}\iint f\fs |u|^{\frac{3\gamma}{2} + 3}\dvs\dv
		\to 0
	\end{multline}
as $\eps\to 0$, and the integral is convergent by Lemma \ref{dvlemma4'}.
The result follows.

\end{proof}

\section{Estimates on $\qgeps$}

In this section we prove that $\qgeps$ maps from $L^1\cap \L^p$ into itself. This will establish its continuity on these spaces and help us show existence, uniqueness and uniform bounds on solutions to (\ref{bteqgeps}).

\subsection{Auxiliary lemmas}
In this section we prove a few lemmas that are necessary to show continuity of $\qgeps.$ We roughly follow the arguments of Lemmas 3,4 and Theorem 5 of \cite{alonsocarneirogamba}.\\

First, we introduce some notation.
Recall from before that we can split $\qgeps$ into its gain and loss parts:
	\begin{multline}
		\qgeps(f,f)(v,t)
		= \int \int_{S^2} f(v',t)f(\vs', t) \geps(|u|^\gamma, \hu\cdot\sigma)\dsigma\dvs
		- \frac{8}{\eps}f(v,t) \\
		=:\qgeps^+(f,f)(v,t)
		= \qgeps^-(f,f)(v,t).
	\end{multline}
Next, for $\eta,\psi, \in C_B(\R^3),$ define
	\begin{align*}
		&\peps(\eta,\psi)(u) : = \int_{S^2}\eta(u^-)\psi(u^+)\geps(|u|^\gamma, \hu\cdot\sigma)\dsigma\\
		& u^-: = \frac{1}{2}(u - |u|\sigma)\\
		& u^+: = \frac{1}{2}(u + |u|\sigma).
	\end{align*}
Finally, define the following radially symmetric functions: for any $f\in L^p$, $1\leq p\leq\infty,$ let
	\begin{align*}
		f_p^*(u) &:= 
		\left(\int_{R\in SO(3)} |f(Ru)|^p \mathrm{d}R\right)^{\frac{1}{p}} \\
		&= \left(\frac{1}{|S^2|}\int_{\sigma \in {S^2}} |f(|u|\sigma)|^p\dsigma\right)^{\frac{1}{p}}
		\text{ if } 1\leq p < \infty,\\
		f_\infty^*(u) &: = \esssup_{R\in SO(3)}|f(Ru)|
		= \esssup_{\sigma\in S^2}|f(|u|\sigma)|.
	\end{align*}
Such functions satisfy the following properties:
\begin{enumerate}[(i)]
\item
$f_p^*$ is radial, i.e. $f_p^*(u) = f_p^*(x)$ whenever $|u| = |x|.$

\item
If g is radial, then $(fg)_p^*(u) = f_p^*g(u).$

\item
If $\dnu$ is a rotationally invariant measure on $\R^3,$ then
	\begin{equation*}
		\int_{\R^3}|f(u)|^p\dnu(u)
		= \int_{\R^3}|f_p^*(u)|^p\dnu(u),
	\end{equation*}
and in particular $\|f\|_{L^p(\R^3)} = \|f_p^*\|_{L^p(\R^3)}.$
\end{enumerate}
We are now ready to introduce the auxiliary lemmas:

\newtheorem{lemma3}{Lemma}[section]
\begin{lemma3}
Let $\eta,\psi,\phi \in C_0(\R^3)$ and $1/p + 1/q + 1/r = 1,$ with $1\leq p,q,r \leq\infty.$ Then,
	\begin{equation}
		\left|\int_{\R^3}\peps(\eta,\psi)(u)\phi(u)\du\right| 
		\leq \int_{\R^3}\peps(\eta_p^*,\psi_q^*)(u)\phi_r^*(u)\du \label{lemma3ineq}.
	\end{equation}
\label{lemma3}\end{lemma3}

\begin{proof}
This lemma and its proof are almost identical to Lemma 3 of \cite{alonsocarneirogamba}.
For some $R\in SO(3)$ we begin with the changes of variable $u \longrightarrow Ru$ and then $\sigma\longrightarrow R\sigma$ in the left hand side of (\ref{lemma3ineq}):

	\begin{multline}
		\left|\int_{\R^3}\peps(\eta,\psi)(u)\phi(u)\du\right|
		= \left|\int_{\R^3}\peps(\eta,\psi)(Ru)\phi(Ru)\du\right|\\
		= \Big|\int_{\R^3}\int_{S^2}\eta\left(\frac{1}{2}(Ru - |u|R\sigma)\right)\psi\left(\frac{1}{2}(Ru + |u|R\sigma)\right)\cdot\\
		\cdot \geps(|u|^\gamma, R\hu\cdot R\sigma)\dsigma\phi(Ru)\du\Big|\\
		\leq \int_{\R^3}\int_{S^2}|\eta(Ru^-)||\psi(Ru^+)|\geps(|u|^\gamma, \hu\cdot \sigma)\dsigma |\phi(Ru)|\du.
	\label{pestimate1}\end{multline}
	
We can characterize the rotation $R = R_{\bar\theta,\bar\omega},$ where $\bar\theta \in [0,\pi],\bar\omega\in S^1$ are defined such that $R_{\bar\theta,\bar\omega}\hu = \hu \cos\bar\theta + \bar\omega\sin\bar\theta.$
Since $R$ is arbitrary and the left hand side of (\ref{lemma3ineq}) does not depend on $R$ we can take the average over all possible rotations in (\ref{pestimate1}) to get
	\begin{multline}
		\left|\int_{\R^3}\peps(\eta,\psi)(u)\psi(u)\du\right|\\
		\leq \int_{\R^3}\int_{S^2}\left(\int_{R\in SO(3)} |\eta(Ru^-)||\psi(Ru^+) |\phi(Ru)|\mathrm{d}R\right)\geps(|u|^\gamma,\hu\cdot \sigma)\dsigma\du\\
		\leq \int_{\R^3} \int_{S^2} \left(\int_{SO(3)}|\eta(Ru^-)|^p\dR\right)^{\frac{1}{p}}
							\left(\int_{SO(3)}|\psi(Ru^+)|^q\dR\right)^{\frac{1}{q}}\cdot\\
							\cdot \left(\int_{SO(3)}|\vphi(Ru^-)|^r\dR\right)^{\frac{1}{r}}
					\geps(|u|^\gamma, \hu\cdot\sigma)\dsigma\du\\
		= \int_{\R^3}\int_{S^2}\Big(\eta_p^*(u^-) \psi_q^*(u^+)\phi_r^*(u)\Big)\geps(|u|^\gamma,\hu\cdot \sigma)\dsigma\du\\
		= \int_{\R^3}\peps(\eta_p^*,\psi_q^*)(u)\phi_r^*(u)\du,
	\label{pestimate2}\end{multline}
where in the end we used Holder's inequality with the exponents $p,q,r.$ This concludes the proof.
\end{proof}

Now we can take advantage of the fact that $\etaps,\psiqs$ are radial to simplify the expression $\peps(\etaps,\psiqs).$ For any function $f: \R^3 \longrightarrow \R,$ let $\bar{f}: \R^+\longrightarrow \R$ be such that $f(x) = \bar{f}(|x|)$ for all $x\in\R^3.$ Then,
	\begin{multline}
		\peps(\etaps,\psiqs)(u)
		= \int_{S^2}\bar{\etaps}(|u^-|)\bar{\psiqs}(|u|^+)\geps(|u|^\gamma, \hu\cdot \sigma)\dsigma\\
		= 2\pi \int_0^1\bar{\etaps}(a_1(|u|^\gamma,\mu))\bar{\psiqs}(a_2(|u|^\gamma,\mu))\geps(|u|^\gamma,\mu)\dmu,
	\end{multline}
where
	\begin{align*}
		a_1(|u|^\gamma,\mu) &:= |u|\sqrt{\frac{1}{2}(1+\mu)} = |u^+|,\\
		 a_2(|u|^\gamma,\mu) &:= |u|\sqrt{\frac{1}{2}(1-\mu)} = |u^-|.
	\end{align*}
This motivates the introduction of a new, simpler bilinear operator defined over bounded, continuous functions of one variable: for $\eta,\psi \in C_B(\R^+)$ define
	\begin{equation*}
		\Beps(\eta,\psi)(x)
		:= \int_0^1 \eta(a_1(x,\mu))\psi(a_2(x,\mu))\geps(x,\mu)\dmu
	\end{equation*}
We prove the following lemma, which is the equivalent of Lemma 4 from \cite{alonsocarneirogamba}:

\newtheorem{lemma4}[lemma3]{Lemma}
\begin{lemma4}
Let $1\leq p \leq \infty$. Then for any $\eta\in L^p(\R^+, x^2\dx)$ and $\psi\in L^\infty(\R^+),$ 
	\begin{equation}
		\|\Beps(\eta,\psi)\|_{L^p(\R^+, x^2\dx)}
		\leq \frac{8}{\pi\eps} \|\psi\|_{L^\infty(\R^+)} \|\eta\|_{L^p(\R^+, x^2\dx)}.
	\label{bboundLp}\end{equation}
\label{lemma4}\end{lemma4}

\begin{proof}
Let $C_\infty = 8$ and for $p<\infty$ let $C_p:=2^{3+ \frac{1}{2p}}.$ Then $C_p \leq 16$ for all $p \in [1,\infty].$
If we let $\mu_\eps(x): = 1-\meps(x) \in [-1, 1)$ be the dirac mass of $\geps,$ then by definition,
	\begin{multline}
		\Beps(\eta,\psi)(x)
		=  \frac{4}{\pi\eps}\eta(a_1(x,\mu_\eps(x)))\psi(a_2(x,\mu_\eps(x)))\1_{0\leq \mu_\eps(x) \leq 1}\\
		= \frac{4}{\pi\eps}\eta(a_1(x,\mu_\eps(x)))\psi(a_2(x,\mu_\eps(x)))\1_{0\leq \meps(x) \leq 1}\\
		= \frac{4}{\pi\eps}\eta(a_1(x,\mu_\eps(x)))\psi(a_2(x,\mu_\eps(x)))\1_{x\geq \eps^{\frac{1}{3}}}.
	\end{multline}
The case $p = \infty$ is trivial, so we assume that $p \neq \infty.$ Then 
	\begin{multline}
		\|\Beps(\eta,\psi)\|_{L^p(\R^+,x^2\dx)}^p
		= \left(\frac{4}{\pi\eps}\right)^p\int_{\eps^{\frac{1}{3}}}^\infty \eta(a_1(x,\mu_\eps(x)))^p\psi(a_2(x,\mu_\eps(x)))^p x^2\dx\\
		\leq  \left(\frac{4}{\pi\eps}\right)^p \|\psi\|_{L^\infty(\R^+)}^p J_{p,\eps}(\eta),
	\end{multline}
where
	\begin{equation*}
		J_{p,\eps}(\eta):= \int_{\eps^{\frac{1}{3}}}^\infty \eta(a_1(x,\mu_\eps(x)))^p x^2\dx.
	\end{equation*}

We estimate the integral $J_{p,\eps}(\eta)$ by performing the change of variable $a_1(x,\mu_\eps(x)) = a_1(x, \eps x^{-3})\longmapsto x:$ 
	\begin{align}
		& a_1(x,\mu_\eps(x))
		 = x\sqrt{\frac{1}{2}(1+\mu_\eps(x))}
		= x\sqrt{1 - \frac{\eps}{2x^3}}
		= \sqrt{x^2 - \frac{\eps}{2x}}, \nonumber\\
		&a_1'(x,\mu_\eps(x))
		 = \frac{1}{2a_1(x,\mu_\eps(x))}\left(2x + \frac{\eps}{2x^2}\right)
		= \frac{4x^3 + \eps}{4x^2a_1(x,\mu_\eps(x))} \cdot \frac{a_1(x,\mu_\eps(x)}{a_1(x,\mu_\eps(x)}, \nonumber\\
		&x^2\dx
		 = \frac{x^2}{a_1'(x,\mu_\eps(x))}\mathrm{d}a_1(x,\mu_\eps(x))
		= \frac{4x^4}{4x^3 + \eps}\frac{ a_1(x,\mu_\eps(x))^2}{x\sqrt{1- \frac{\eps}{2x^3}}}\mathrm{d}a_1(x,\mu_\eps(x))\nonumber\\
		& \hspace{2.5in}  \leq \frac{a_1^2}{\sqrt{1-\frac{\eps}{2x^3}}}\mathrm{d}a_1
		\leq \sqrt 2 a_1^2\mathrm{d}a_1,
	\end{align}
so
	\begin{multline}
		J_{p,\eps}(\eta)
		= \int_{\eps^{\frac{1}{3}}}^\infty \eta(a_1(x,\mu_\eps(x)))^p x^2\dx\\
		\leq \sqrt 2 \int_0^\infty \eta^p(a_1)a_1^2\mathrm{d}a_1
		= \sqrt2 \|\eta\|_{L^p(\R^+, x^2\dx)}^p, 
	\end{multline}
as was to be shown.
\end{proof}

\newtheorem{thm5}[lemma3]{Lemma}
\begin{thm5}
Let $1\leq p \leq \infty$. The bilinear operator $\peps$ extends to a bounded operator from $L^p(\R^3)\times L^\infty(\R^3)$ to $L^p(\R^3)$, and
	\begin{equation}
		\|\peps(\eta,\psi)\|_{L^p(\R^3)}
		\leq \frac{16}{\eps}\|\psi\|_{L^\infty(\R^3)}\|\eta\|_{L^p(\R^3)}.\label{pbound}
	\end{equation}

\label{thm5}
\end{thm5}

\begin{proof}
Let $\eta\in L^p(\R^3), \psi\in L^\infty(\R^3)$ and $\phi \in L^{p'}(\R^3).$ By Lemma \ref{lemma3} combined with a density argument,
	\begin{equation*}
		\int_{\R^3} \peps(\eta,\psi)(u)\phi(u)\du
		\leq \int_{\R^3}\peps(\etaps,\psinfs)(u)\phipps(u)\du.
	\end{equation*}
Since the functions $\etaps, \psinfs, \phipps$ are radial in $u$, let $\bar\eta_p, \bar\psi_\infty, \bar\phi_{p'}: \R^+ \longmapsto \R$ such that for any $u\in \R^3,$ $\etaps(u) = \bar\eta_p(|u|), \psinfs(u) = \bar\psi_\infty(|u|), \phipps(u) = \bar\phi_{p'}(|u|).$ Then for $p\neq \infty$ $\bar\eta_p \in L^p(\R^+, x^2\dx),\bar\psi_q\in L^q(\R^3, x^2\dx)$ and $\bar\phi_r \in L^r(\R^3, x^2\dx)$ and 
	\begin{multline*}
		\int_{\R^3} \peps(\etaps,\psinfs)(u)\phipps(u)\du
		= 2\pi\int_0^\infty\Beps(\bar\eta_p, \bar\psi_\infty)(x) \bar\phi_{p'}(x)x^2\dx\\
		\leq 2\pi \|\Beps(\bar\eta_p, \bar\psi_\infty)\|_{L^p(\R^+, x^2\dx)}\|\phipps\|_{L^{p'}(\R^3)}\\
		\leq 2\pi \frac{8}{\pi\eps}\|\bar\psi_\infty\|_{L^\infty(\R^+)}\|\bar\eta_p\|_{L^p(\R^+,x^2\dx)}\|\phi\|_{L^{p'}(\R^3)}\\
		= \frac{16}{\eps}\|\psi\|_{L^\infty(\R^3)}\|\eta\|_{L^p(\R^3}\|\phi\|_{L^{p'}(\R^3)}
	\end{multline*}
For $p=\infty$ the estimate above is almost identical (replace $\|\bar\eta_p\|_{L^p(\R^+, x^2\dx)}$ with $\|\bar\eta_\infty\|_{L^\infty(\R^+)}$).
This shows us that $\peps(\eta,\psi)$ is a bounded, real-valued linear operator acting on $L^{p'}$, and therefore belongs to $L^p$ with norm bounded by
	\begin{equation}
		\|\peps(\eta,\psi)\|_{L^p(\R^3)} 
		\leq \frac{16}{\eps}\|\psi\|_{L^\infty(\R^3)}\|\eta\|_{L^p(\R^3)},
	\end{equation} 
as was to be shown.
\end{proof}

\subsection{Continuity of $\qgeps$}
Our first step here is to prove boundedness of $\qgeps^+$:

\newtheorem{thm1}[lemma3]{Theorem}
\begin{thm1}
Let $1\leq p,q,r \leq \infty,$ $1/p + 1/q = 1+ 1/r.$ Then the bilinear operator $\qgeps^{+}$ extends to a bounded operator from $L^p(\R^3)\times L^q(\R^3)$ to $L^r(\R^3),$ and 
	\begin{equation}
		\|\qgeps^+(f,h)\|_{L^r(\R^3)}
		\leq \frac{16}{\eps} \|f\|_{L^p(\R^3)}\|h\|_{L^q(\R^3)}
	\label{qgeps+bound}\end{equation}
with $C_p$ defined as before.
	\label{thm1}\end{thm1}

\begin{proof}
First suppose that $(p,q,r) \neq (1,1,1), (1,\infty,\infty), (\infty,1,\infty).$ Let $\psi \in L^{r'}(\R^3)$ be a test function and define
	\begin{multline}
		K_\psi: = \int_{\R^3} \qgeps^+(f,h)(v)\psi(v)\dv
		= \iint_{\R^3\times\R^3}f(v) h(v-u)\p_\eps(\tau_v\mathcal{R}\psi,1)(u)\du\dv\\
		= \iint_{\R^3\times\R^3}\left(f(v)^{\frac{p}{r}} h(v-u)^{\frac{q}{r}}\right)\left(f(v)^{\frac{p}{q'}}\p_\eps(\tau_v\mathcal{R}\psi,1)(u)^{\frac{r'}{q'}}\right)\cdot\\
		\hspace{2in} \cdot \left(g(v-u)^{\frac{q}{p'}}\p_\eps(\tau_v\mathcal{R}\psi,1)(u)^{\frac{r'}{p'}}\right)\du\dv\\
		\leq K_\psi^1 K_\psi^2 K_\psi^3
	\end{multline}
by Holder's inequality with the exponents $p', q', r$, where
	\begin{align*}
		K_\psi^1
		& := \left(\iint f(v)^ph(v-u)^q\du\dv\right)^{\frac{1}{r}}
		= \|f\|_{L^p}^{\frac{p}{r}}\|h\|_{L^q}^{\frac{q}{r}},\\
		K_\psi^2
		& := \left(\iint f(v)^p \p_\eps(\tau_v\mathcal{R}\psi,1)(u)^{r'}\du\dv\right)^{\frac{1}{q'}}
		= \|f\|_{L^p}^{\frac{p}{q'}} \|\p_\eps(\tau_v\mathcal{R}\psi,1)\|_{L^{r'}}^{\frac{r'}{q'}}\\
		K_\psi^3
		& := \left(\iint h(v-u)^q \p_\eps(\tau_v\mathcal{R}\psi,1)(u)^{r'}\du\dv\right)^{\frac{1}{p'}}
		 = \|h\|_{L^q}^{\frac{q}{p'}} \|\peps(\tau_{-v}\mathcal{R}\psi, 1)\|_{L^{r'}}^{\frac{r'}{p'}}.
	\end{align*}
Then by Lemma (\ref{thm5}), 
	\begin{multline}
		K_\psi
		= \langle \qgeps^+(f,h),\psi\rangle_{L^r(\R^3), L^{r'}(\R^3)}\\
		\leq \|f\|_{L^p(\R^3)}\|h\|_{L^q(\R^3)}\|\p_\eps(\tau_{-v}\mathcal{R}\psi, 1)\|_{L^{r'}(\R^3)}\\
		\leq \frac{16}{\eps}\|f\|_{L^p(\R^3)}\|h\|_{L^q(\R^3)}\|\psi\|_{L^{r'}(\R^3)}.
	\label{qgeps+weakbound}\end{multline}
$\qgeps^+(f,h)$ is therefore a bounded linear operator defined on $L^{r'}(\R^3),$ that is, it lies in the dual space $L^r(\R^3)$ and in particular $\psi = (\qgeps^{+}(f,h))^{r-1} \in L^{r'}(\R^3).$ Substituting this choice of $\psi$ into (\ref{qgeps+weakbound}) we obtain
	\begin{multline}
		\|\qgeps^+(f,h)\|_{L^r(\R^3)}^r
		=  \langle \qgeps^+(f,h), (\qgeps^{+}(f,h))^{r-1}\rangle_{L^r(\R^3), L^{r'}(\R^3)}\\
		\leq \frac{16}{\eps}\|f\|_{L^p(\R^3)}\|h\|_{L^q(\R^3)}\|\qgeps^+(f,h)\|_{L^r(\R^3)}^{r-1},
	\end{multline}
and (\ref{qgeps+bound}) follows (the cases $(p,q,r)= (1,1,1), (1,\infty,\infty), (\infty,1,\infty)$ are similar).
\end{proof}

\newtheorem{riclemma}[lemma3]{Lemma}
\begin{riclemma}
For any $p\in[1,\infty],$ $K>1$ and $0\leq \feps\in L^1_2(\R^3)\cap L^p(\R^3)\cap L\log L(\R^3)$ a weak solution to the Boltzmann equation, the following holds:
if $p < \infty,$
	\begin{multline}
		\|\qgeps^+(\feps,\feps)\|_{L^p(\R^3)}
		\leq K^{\frac{1}{p'}}\frac{16}{\eps}\|\feps\|_{L^p(\R^3)}^{\frac{1}{2}}\\
		+ \frac{16}{\eps\log K}\|f_0\|_{L\log L(\R^3)} \|\feps\|_{L^p(\R^3)}
		\textit{ for } p < \infty,
	\label{riclemmaeq1}\end{multline}
and if $p=\infty,$
	\begin{equation}
		\|\qgeps^+(\feps,\feps)\|_{L^\infty(\R^3)}
		\leq \frac{16K}{\eps}.
		+ \frac{16}{\eps\log K}\|f_0\|_{L\log L(\R^3)} \|\feps\|_{L^\infty(\R^3)}
	\label{riclemmaeq2}\end{equation}.
\label{riclemma} 
\end{riclemma}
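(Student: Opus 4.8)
The plan is to split $\feps$ into its small and large parts, estimate each with the bilinear bound of Theorem~\ref{thm1}, and turn the contribution of the large values of $\feps$ into the entropy of the initial datum by means of the entropy dissipation inequality satisfied by a weak solution. Throughout we use that mass conservation gives $\|\feps(\cdot,t)\|_{L^1(\R^3)}=\|\fo\|_{L^1(\R^3)}$, normalised to $1$. Fix $K>1$ and write $\feps=\feps^\flat+\feps^\sharp$ with $\feps^\flat:=\feps\,\1_{\{\feps\le K\}}$ and $\feps^\sharp:=\feps\,\1_{\{\feps>K\}}$; by linearity of $\qgeps^+$ in its second argument, $\qgeps^+(\feps,\feps)=\qgeps^+(\feps,\feps^\flat)+\qgeps^+(\feps,\feps^\sharp)$, and we bound the two terms separately.

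For the small part, when $p<\infty$ I apply Theorem~\ref{thm1} with exponents $(a,a,p)$, where $a:=\tfrac{2p}{p+1}\in[1,p]$ so that $\tfrac1a+\tfrac1a=1+\tfrac1p$; this yields $\|\qgeps^+(\feps,\feps^\flat)\|_{L^p(\R^3)}\le\tfrac{16}{\eps}\|\feps\|_{L^a(\R^3)}\|\feps^\flat\|_{L^a(\R^3)}$. Since $0\le\feps^\flat\le K$ on its support we get $\|\feps^\flat\|_{L^a}^a\le K^{a-1}\|\feps\|_{L^1}=K^{a-1}$, i.e.\ $\|\feps^\flat\|_{L^a}\le K^{1/a'}$; and interpolating $\feps\in L^1\cap L^p$ at the exponent $a=\tfrac{2p}{p+1}$ gives precisely the weight $\theta=\tfrac12$, hence $\|\feps\|_{L^a}\le\|\feps\|_{L^1}^{1/2}\|\feps\|_{L^p}^{1/2}=\|\feps\|_{L^p}^{1/2}$. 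Since $\tfrac1{a'}=\tfrac1{2p'}\le\tfrac1{p'}$ and $K>1$, this gives $\|\qgeps^+(\feps,\feps^\flat)\|_{L^p}\le K^{1/p'}\tfrac{16}{\eps}\|\feps\|_{L^p}^{1/2}$, the first term of (\ref{riclemmaeq1}). When $p=\infty$ one uses instead Theorem~\ref{thm1} with exponents $(1,\infty,\infty)$ together with $\|\feps^\flat\|_{L^\infty}\le K$ and $\|\feps\|_{L^1}=1$, producing the term $\tfrac{16K}{\eps}$ of (\ref{riclemmaeq2}).

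For the large part, Theorem~\ref{thm1} with exponents $(p,1,p)$ (valid for every $p\in[1,\infty]$) gives $\|\qgeps^+(\feps,\feps^\sharp)\|_{L^p(\R^3)}\le\tfrac{16}{\eps}\|\feps\|_{L^p(\R^3)}\|\feps^\sharp\|_{L^1(\R^3)}$. On $\{\feps>K\}$ one has $\log\feps>\log K>0$, so $\feps\le\tfrac1{\log K}\feps\log\feps$ there, whence $\|\feps^\sharp\|_{L^1}\le\tfrac1{\log K}\int_{\{\feps>K\}}\feps\log\feps\,\dv\le\tfrac1{\log K}\|\fo\|_{L\log L(\R^3)}$, the last inequality following by first absorbing the lower order, sign indefinite contribution of $\{\feps<1\}$ into the a priori bounds on mass and energy and then using the entropy dissipation inequality $\int\feps(\cdot,t)\log\feps(\cdot,t)\,\dv\le\int\fo\log\fo\,\dv$ from the definition of a weak solution. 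This bounds the second term by $\tfrac{16}{\eps\log K}\|\fo\|_{L\log L}\|\feps\|_{L^p}$, which appears in both (\ref{riclemmaeq1}) and (\ref{riclemmaeq2}); summing the two estimates completes the proof.

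The only genuinely delicate point is the entropy step above: because $s\mapsto s\log s$ is not sign definite, rewriting $\int_{\{\feps>K\}}\feps$ in terms of $\|\fo\|_{L\log L}$ must be done by discarding (or controlling) the entropy on $\{\feps<1\}$ via the conserved mass and energy before invoking entropy decay. Everything else amounts to bookkeeping around the choice $a=\tfrac{2p}{p+1}$, which is singled out because it is the exponent in $[1,p]$ for which the interpolation $L^1\cap L^p\hookrightarrow L^a$ distributes the two norms with equal weights, thereby producing the powers $K^{1/p'}$ and $\|\feps\|_{L^p}^{1/2}$ in the statement.
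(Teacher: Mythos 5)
Your proposal is correct and takes essentially the same route as the paper's proof: the same splitting of $\feps$ at height $K$, Theorem \ref{thm1} with exponents $\left(\tfrac{2p}{p+1},\tfrac{2p}{p+1},p\right)$ together with the interpolation $\|\feps\|_{L^{2p/(p+1)}}\leq\|\feps\|_{L^1}^{1/2}\|\feps\|_{L^p}^{1/2}$ for the small part (giving $K^{1/(2p')}\leq K^{1/p'}$), exponents $(p,1,p)$ with $\feps\leq \feps\log\feps/\log K$ on $\{\feps>K\}$ for the large part, and the same treatment of $p=\infty$. Your explicit remark that the negative part of $s\log s$ must be absorbed via the mass and energy bounds before invoking entropy decay is a care the paper's own write-up skips, but the argument is otherwise identical.
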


\begin{proof}
For $K > 1$ we can write $\qgeps^+(\feps,\feps)$ as
	\begin{multline}
		\qgeps^+(\feps,\feps)
		= A + B
		:= \qgeps^+(\feps, \feps\1_{\feps\leq K})
		+ \qgeps^+ (\feps, \feps\1_{\feps > K})\\
		\leq K^{\frac{1}{2p'}} \qgeps^+(\feps, \feps^{\frac{p+1}{2p}})
		+ \frac{1}{\log K}\qgeps^+(\feps, \feps\log \feps).
	\label{qplussplit}\end{multline}

First, let $p\neq \infty.$ 
	\begin{multline}
		\|A\|_{L^p(\R^3)}
		 \leq K^{\frac{1}{2p'}}\frac{16}{\eps} \|\feps\|_{L^{\frac{2p}{p+1}}(\R^3)}\|\feps^{\frac{p+1}{2p}}\|_{L^{\frac{2p}{p+1}}(\R^3)}\\
		 \leq K^{\frac{1}{2p'}}\frac{16}{\eps}\|\feps\|_{L^{\frac{2p}{p+1}}(\R^3)}\|\feps\|_{L^1(\R^3)}^{\frac{p+1}{2p}}
		\leq K^{\frac{1}{2p'}}\frac{16}{\eps}\|\feps\|_{L^p(\R^3)}^\frac{1}{2},
	\end{multline}

where in the end we used the Riesz-Thorin Interpolation Theorem to get
 $\|\feps\|_{L^{\frac{2p}{p+1}}} \leq \|\feps\|_{L^1}^{\frac{1}{2}}\|\feps\|_{L^p}^{\frac{1}{2}} = \|\feps\|_{L^p}^{\frac{1}{2}}.$ 

For $B,$ we simply use Theorem \ref{thm1} with the coefficients $\left(p, 1, p\right):$
	\begin{multline}
		\|B\|_{L^p(\R^3)}
		\leq \frac{16}{\eps\log K} \|\feps\|_{L^p(\R^3)}\|\feps\log \feps\|_{L^1(\R^3)}\\
		\leq \frac{16}{\eps\log K} \|\feps\|_{L^p(\R^3)}\|f_0\|_{L\log L(\R^3)}.
	\end{multline}

Now, let $p=\infty.$ 
The proof is almost the same, but this time we bound $A$ by
	\begin{equation*}
		A = \qgeps^+(\feps, \feps\1_{\feps \leq K})
		\leq K\qgeps^+(\feps, 1),
	\end{equation*}
so that
	\begin{equation*}
		\|A\|_{L^\infty} 
		\leq K\frac{16}{\eps} \|\qgeps^+(\feps, 1)\|_{L^\infty}
		\leq K\frac{16}{\eps}\|\feps\|_{L^1}\|1\|_{L^\infty} 
		= \frac{16K}{\eps}.
	\end{equation*}

For $B,$ we follow the same steps as before:
	\begin{multline*}
		\|B\|_{L^\infty}
		\leq \frac{1}{\log K}\|\qgeps^+(\feps, \feps\log \feps)\|_{L^\infty}
		\leq \frac{16}{\eps\log K}\|\feps\|_{L^\infty} \|\feps\log \feps\|_{L^1} \\
		\leq  \frac{16}{\eps\log K}\|\feps\|_{L^\infty}\|f_0\|_{L\log L},
	\end{multline*}


as was to be shown.
\end{proof}

\section{Existence and uniqueness of $\bm{\feps}$}

The proof of existence and uniqueness is inspired by an existence proof from \cite{alonsogambatran}, in which the following theorem from \cite{bressan} is applied:

\newtheorem{bressan}{Theorem}[section]
\begin{bressan}
	Let $E$ be a Banach space, $F$ a bounded, convex and closed subset of $E$, and $Q: F \longrightarrow E$ an operator such that the following holds:
\begin{enumerate}[(i)]

\item Holder continuity: for all $f,h\in F,$
	\begin{equation}
		\|Q[f] - Q[h]\|_E \leq C\|f-h\|_E^\beta \hspace{.1cm} \text{ for some }\beta \in (0,1)
	\label{holder}\end{equation}
	
\item the subtangent condition: for all $f\in F,$
	\begin{equation}
		\liminf_{\delta\rightarrow 0^+} \frac{1}{\delta} dist_E(f + \delta Q[f], F) = 0
	\label{subtangent}\end{equation}
	
\item the one-sided Lipschitz condition: for all $f,h\in F,$
	\begin{equation}
		[Q[f] - Q[h], f-h] \leq C \|f-h\|_E \hspace{.1cm} \text{ for all } f, h \in F
	\label{onesidedlipschitz}\end{equation}
where $[\phi, \psi] : =  \lim_{\delta\rightarrow 0^+} \delta^{-1}(\| \psi + \delta\phi\|_E - \|\psi\|_E).$		
\end{enumerate}
Then, the equation 
	\begin{equation*}
		\begin{dcases}
			& \partial_t f = Q[f] \text{ on } [0, \infty)\times E\\
			& f(0) = \fo \geq 0 \in F \text{ on } \{0\}\times E
		\end{dcases}
	\label{bressaneq}\end{equation*}
has a unique solution, $f,$ which lies in $C^1((0,\infty), E)\cap C([0,\infty), F).$

\label{bressan}\end{bressan}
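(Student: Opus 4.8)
The plan is to follow the classical Euler-polygon and compactness construction for ordinary differential equations in Banach spaces, in which the three hypotheses play complementary roles: the subtangent condition (\ref{subtangent}) produces approximate solutions that remain inside $F$, the one-sided Lipschitz condition (\ref{onesidedlipschitz}) forces those approximate solutions to converge, and the Holder continuity (\ref{holder}) lets one pass to the limit in, and then differentiate, the resulting integral equation. Fix $T>0$; it suffices to construct a solution on $[0,T]$ and then exhaust $[0,\infty)$ by a gluing argument based on uniqueness. First I would record the a priori bound $M:=\sup_{f\in F}\|Q[f]\|_E<\infty$, which follows from (\ref{holder}) and the boundedness of $F$, since $\|Q[f]\|_E\le\|Q[\fo]\|_E+C\,(\operatorname{diam}_E F)^\beta$ for every $f\in F$.

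\emph{Step 1 (approximate solutions).} For each integer $n$ I would build, by a maximality (Zorn's lemma) argument over finite partitions of $[0,T]$, a partition $0=t_0<\dots<t_N=T$ of mesh at most $1/n$ together with points $f_n(t_0)=\fo,\,f_n(t_1),\dots,f_n(t_N)\in F$ such that $\|f_n(t_{k+1})-f_n(t_k)-(t_{k+1}-t_k)\,Q[f_n(t_k)]\|_E\le\eta_k\,(t_{k+1}-t_k)$ with $\sum_k\eta_k(t_{k+1}-t_k)\le 1/n$; what makes each single step possible is exactly (\ref{subtangent}), which says that from any $f\in F$ one may move for a short time $\delta$ in the direction $Q[f]$ and land within $o(\delta)$ of $F$. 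Interpolating linearly between consecutive nodes keeps $f_n(t)\in F$ for all $t$ by convexity of $F$, and gives $\|f_n'(t)\|_E\le M+\eta_k$ on $(t_k,t_{k+1})$, so the polygons $f_n$ are uniformly Lipschitz in $t$. Reading off this structure one writes $f_n'(t)=Q[f_n(t)]+r_n(t)$ for a.e.\ $t$, where the defect satisfies $\int_0^T\|r_n(t)\|_E\,\dt\to 0$ as $n\to\infty$: on $[t_k,t_{k+1}]$ one has $\|f_n(t)-f_n(t_k)\|_E\le M/n$, hence $\|Q[f_n(t_k)]-Q[f_n(t)]\|_E\le C(M/n)^\beta$ by (\ref{holder}), plus the contribution of the $\eta_k$'s.

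\emph{Steps 2 and 3 (convergence, and the limit solves the equation).} For indices $n,m$ set $\phi(t):=\|f_n(t)-f_m(t)\|_E$, a Lipschitz, hence a.e.\ differentiable, function. Combining the standard inequality $\frac{d}{dt}\|x(t)\|_E\le[\dot x(t),x(t)]$ along Lipschitz curves, the subadditivity $[\phi_1+\phi_2,\psi]\le[\phi_1,\psi]+\|\phi_2\|_E$ of the bracket, and the one-sided Lipschitz estimate (\ref{onesidedlipschitz}) applied to $f_n(t),f_m(t)\in F$, I obtain $\frac{d}{dt}\phi(t)\le C\phi(t)+\|r_n(t)\|_E+\|r_m(t)\|_E$ for a.e.\ $t$; since $\phi(0)=0$, Gronwall's inequality gives $\sup_{[0,T]}\phi\le e^{CT}\int_0^T(\|r_n\|_E+\|r_m\|_E)\,\dt\to 0$. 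Thus $\{f_n\}$ is Cauchy in $C([0,T],E)$; let $f$ be its limit, so $f\in C([0,T],F)$ (as $F$ is closed and each $f_n(t)\in F$) with $f(0)=\fo$. Integrating Step 1, $f_n(t)=\fo+\int_0^t(Q[f_n(s)]+r_n(s))\,\ds$; by (\ref{holder}) and $f_n\to f$ uniformly, $Q[f_n]\to Q[f]$ uniformly on $[0,T]$, while $\int_0^t r_n\,\ds\to 0$, so $f(t)=\fo+\int_0^t Q[f(s)]\,\ds$. Since $s\mapsto Q[f(s)]$ is continuous, $f$ is $C^1$ with $f'=Q[f]$, and after exhausting $[0,\infty)$ one gets $f\in C^1((0,\infty),E)\cap C([0,\infty),F)$. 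Uniqueness is immediate: if $f,g$ both solve the equation with $f(0)=g(0)=\fo$, then $\psi(t):=\|f(t)-g(t)\|_E$ satisfies $\frac{d}{dt}\psi(t)\le[Q[f(t)]-Q[g(t)],f(t)-g(t)]\le C\psi(t)$ a.e.\ by (\ref{onesidedlipschitz}), and $\psi(0)=0$, whence $\psi\equiv 0$ by Gronwall.

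The main obstacle is Step 1: constructing the approximating polygons so that they genuinely remain in $F$ while keeping the accumulated defect $\int_0^T\|r_n\|_E\,\dt$ under control requires the maximality argument together with careful bookkeeping of the $\eta_k$'s, and it is here that convexity and closedness of $F$ and the precise quantitative form of (\ref{subtangent}) are indispensable. A secondary technical point, common to both Gronwall estimates above, is the rigorous use of the bracket $[\,\cdot\,,\,\cdot\,]$ as a one-sided derivative of the norm along merely Lipschitz curves; beyond these two points the argument is routine bookkeeping.
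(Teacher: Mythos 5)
Your outline is essentially the standard argument that the paper itself does not reproduce: the theorem is quoted from \cite{bressan} (see also the appendix of \cite{alonsogambatran}), where the proof proceeds exactly as you describe, with Euler polygons built from the subtangent condition, a Gronwall estimate through the bracket and the one-sided Lipschitz condition, and the Holder continuity used to pass to the limit and to get $C^1$ regularity and uniqueness. So your proposal is correct and follows the same route as the cited proof; no discrepancies to flag.
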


The proof of this theorem can be found in \cite{bressan} or \cite{alonsogambatran}. A direct application will allow us to prove the existence and uniqueness of $\feps$ by choosing an appropriate space $E$ and set $F.$

\newtheorem{existence}[bressan]{Theorem}
\begin{existence}
For $ 1\leq p \leq \infty$, let $ E: = L^1_2\cap L^p (\R^3)$ be a Banach space with the norm $\| f \|_E: = \|f\|_{L^1} + \|f\|_{L^p}$, and let $0\leq \fo \in  F\cap L^1_2\cap L\log L$ with $\|\fo\|_{L^1} = 1,$ where
	\begin{equation*}
		 F: = \left\{ f\in E: f\geq 0, \|f\|_{L^1} = \|\fo\|_{L^1} = 1, \|f\|_{L^p(\R^3)}\leq C\right\}
	\end{equation*}
for some $C >0.$
Then, there exists a unique solution, $\feps,$ to the Boltzmann equation (\ref{bteqgeps}) which lies in $C^1((0,\infty), E) \cap C^1([0,\infty), F)$ that preserves mass, momentum, energy, and whose entropy is bounded by the initial entropy.
\label{existence}\end{existence}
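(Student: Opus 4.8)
The plan is to apply Bressan's Theorem~\ref{bressan} with the operator $Q=\qgeps$, the Banach space $E=L^1_2\cap L^p(\R^3)$, and the set $F$ of the statement, and then to read the conservation and entropy properties off the weak formulation of (\ref{bteqgeps}). First I would record the elementary structural facts about $F$: nonnegativity, the affine constraint $\|f\|_{L^1}=\int f=1$, and the $L^p$-ball $\|f\|_{L^p}\le C$ are each convex and closed, and together they make $F$ bounded in $E$. Next, $\qgeps$ maps $F$ into $E$: the loss part is $\qgeps^-(f,f)=\tfrac 8\eps f\in E$, while for the gain part Theorem~\ref{thm1} with the exponent triples $(1,1,1)$ and $(p,1,p)$ (using $\|f\|_{L^1}=1$) gives $\qgeps^+(f,f)\in L^1\cap L^p$, and a moment estimate — based on the uniform bound $1+|v'|^2\le 8(1+|v|^2)(1+|\vs|^2)$ together with the pre-/post-collisional involution $(v,\vs,\sigma)\mapsto(v',\vs',\hu)$, under which $\geps$ is invariant and the Jacobian is one — yields $\|\qgeps^+(g,h)\|_{L^1_2}\le \tfrac{64}{\eps}\|g\|_{L^1_2}\|h\|_{L^1_2}$, so $\qgeps^+(f,f)\in L^1_2$ as well.

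For the H\"older condition (\ref{holder}) and the one-sided Lipschitz condition (\ref{onesidedlipschitz}) I would use bilinearity: $\qgeps(f,f)-\qgeps(h,h)=\qgeps(f-h,f)+\qgeps(h,f-h)$. Applying Theorem~\ref{thm1} with triples $(1,1,1)$, $(p,1,p)$ and $(1,p,p)$ for the $L^1$ and $L^p$ components (with the normalization $\|f\|_{L^1}=\|h\|_{L^1}=1$), the trivial bound $\|\qgeps^-(f,f)-\qgeps^-(h,h)\|_E=\tfrac8\eps\|f-h\|_E$, and the moment estimate (with $f-h$ replaced by $|f-h|$ inside the nonnegative kernel) for the $L^1_2$ component, one obtains the Lipschitz bound $\|\qgeps(f,f)-\qgeps(h,h)\|_E\le \tfrac C\eps\|f-h\|_E$ on $F$. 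On the bounded set $F$ this implies (\ref{holder}) with any $\beta\in(0,1)$, absorbing $\|f-h\|_E^{1-\beta}\le\operatorname{diam}(F)^{1-\beta}$ into the constant; and since $[\phi,\psi]\le\|\phi\|_E$ always, it also implies (\ref{onesidedlipschitz}).

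The main obstacle is the subtangent condition (\ref{subtangent}). For $0<\delta<\eps/8$ write
\[
	f+\delta\qgeps(f,f)=\Big(1-\tfrac{8\delta}{\eps}\Big)f+\delta\,\qgeps^+(f,f)\ge 0,
\]
so nonnegativity is preserved, and mass is \emph{exactly} conserved because $\int\qgeps(f,f)\dv=0$. The delicate point is keeping $\|f+\delta\qgeps(f,f)\|_{L^p}\le C$, since the crude bound only gives $\|f+\delta\qgeps(f,f)\|_{L^p}\le C+\tfrac{24C\delta}{\eps}$, which escapes $F$. Here I would invoke the sharp estimate of Lemma~\ref{riclemma}: choosing $K>1$ with $\log K$ larger than a fixed multiple of $\|\fo\|_{L\log L}$ makes the coefficient of $\|f\|_{L^p}$ in the bound on $\|\qgeps^+(f,f)\|_{L^p}-\tfrac8\eps\|f\|_{L^p}$ strictly negative, so that $\tfrac{d}{d\delta}\big|_{0^+}\|f+\delta\qgeps(f,f)\|_{L^p}^p=p\int f^{p-1}\qgeps(f,f)\dv\le p\|f\|_{L^p}^{p-1}\big(\|\qgeps^+(f,f)\|_{L^p}-\tfrac8\eps\|f\|_{L^p}\big)\le 0$ once $\|f\|_{L^p}\ge C_*(\eps,\|\fo\|_{L\log L})$. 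Taking the constant $C$ defining $F$ to be $\max(\|\fo\|_{L^p},C_*)$ makes the $L^p$-direction point inward along $\partial F$, hence $f+\delta\qgeps(f,f)\in F$ for all small $\delta$ and $\operatorname{dist}_E(f+\delta\qgeps(f,f),F)=0$. For Lemma~\ref{riclemma} to be applicable to every $f\in F$ one needs $\|f\log f\|_{L^1}$ controlled on $F$; this is why $F$ must also carry the entropy bound $\int f\log f\le\int\fo\log\fo$ (equivalently, together with fixed mass and energy it bounds $\|f\log f\|_{L^1}$), and this bound is preserved to first order along the flow by the $H$-theorem $\int\qgeps(f,f)\log f\,\dv\le 0$, which holds here because $\geps$ is symmetric in $v\leftrightarrow\vs$ and invariant under the pre-/post-collisional involution.

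With the three conditions verified, Bressan's theorem produces a unique $\feps\in C^1((0,\infty),E)\cap C([0,\infty),F)$ solving $\partial_t\feps=\qgeps(\feps,\feps)$, $\feps(0)=\fo$, i.e.\ solving (\ref{bteqgeps}); by construction it is nonnegative with $\|\feps(t)\|_{L^1}=1$. Testing the equation against smooth cutoffs of $1,v,|v|^2$ and passing to the limit using the a priori bounds, together with $\int\qgeps(f,f)(1,v,|v|^2)\dv=0$, gives conservation of mass, momentum and energy; the $H$-theorem $\tfrac{d}{dt}\int\feps\log\feps=\int\qgeps(\feps,\feps)\log\feps\le 0$ gives $\int\feps(t)\log\feps(t)\le\int\fo\log\fo$; and conservation of energy together with $\fo\in L^1_2$ keeps $\feps(t)\in L^1_2$ for all $t>0$. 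The case $p=\infty$ is handled identically, using the $p=\infty$ versions of Theorem~\ref{thm1} and Lemma~\ref{riclemma}.
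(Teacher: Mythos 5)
Your overall framework coincides with the paper's: Bressan's Theorem \ref{bressan} with the same $E$, $F$ and $Q[f]=\qgeps(f,f)$, H\"older continuity from bilinearity plus Theorem \ref{thm1}, and your shortcut $[\phi,\psi]\le\|\phi\|_E$ for the one-sided Lipschitz condition is in fact cleaner than the paper's detour through the subtangent construction. The genuine gap is in your verification of the subtangent condition. You want $\operatorname{dist}_E(f+\delta\qgeps(f,f),F)=0$ by showing the $L^p$ norm cannot cross the level $C$, and for that you invoke Lemma \ref{riclemma}. But that lemma is proved for \emph{weak solutions}: the term $\frac{16}{\eps\log K}\|\fo\|_{L\log L}\|\feps\|_{L^p}$ comes from replacing $\|\feps\log\feps\|_{L^1}$ by $\|\fo\|_{L\log L}$ via the H-theorem, which is unavailable for a generic $f\in F$, and the $F$ of the statement carries no entropy constraint (controlling the positive part of $f\log f$ by $\|f\|_{L^p}\le C$ makes $C_*$ depend exponentially on $C$, so the self-consistent choice $C\ge C_*$ fails). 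Your fix---adding $\int f\log f\,\dv\le\int \fo\log \fo\,\dv$ to the definition of $F$---changes the set in the theorem and creates new unverified obligations for Bressan's theorem: closedness of the entropy-augmented $F$ in the norm $\|\cdot\|_{L^1}+\|\cdot\|_{L^p}$ (the entropy functional is not lower semicontinuous for this norm, which carries no moment weight), and subtangency relative to the new constraint, where ``preserved to first order'' (a nonpositive derivative from $\int\qgeps(f,f)\log f\,\dv\le0$) is not the same as membership in $F$, nor even an $o(\delta)$ distance to it.

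The same soft spot affects the $L^p$ constraint itself: knowing that $\frac{d}{d\delta}\big|_{0^+}\|f+\delta\qgeps(f,f)\|_{L^p}^p\le 0$ once $\|f\|_{L^p}\ge C_*$ only gives $\|f+\delta\qgeps(f,f)\|_{L^p}\le C+o(\delta)$ at the boundary (the derivative may vanish there), so the perturbed function need not lie in $F$; to conclude you would still have to show that an $o(\delta)$ excess of the $L^p$ norm can be removed by an $o(\delta)$ correction in $E$ that preserves nonnegativity and the exact mass constraint, and this projection step is missing. The paper avoids all of this: it sets $f_R:=f\1_{B_R}$ and $\omega:=f+\delta\qgeps(f_R,f_R)$, checks $\omega\in F$ for $\delta$ small using only Theorem \ref{thm1} (no entropy information enters the subtangency at all), and then estimates $\frac1\delta\|f+\delta\qgeps(f,f)-\omega\|_E\le\frac{16}{\eps}(2+C)\|f-f_R\|_E$, which is made arbitrarily small by taking $R$ large. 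The sharp gain estimate of Lemma \ref{riclemma} is used only later, in Theorem \ref{integrability}, where $\feps$ is already a solution and the entropy bound is legitimate; I recommend you replace your subtangency argument by this truncation construction and keep the rest of your write-up.
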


\begin{proof}
Let $\eps>0.$
Fix $p$, and for any $f \in F,$ define $Q[f]: = \qgeps(f,f).$ One can easily check that $F$ is bounded, convex and closed in $E,$ so it remains to show that (\ref{holder}), (\ref{subtangent}) and (\ref{onesidedlipschitz}) hold.

Using the bilinearity of $\qgeps$ and Theorem \ref{thm1}, for any $f,h \in F,$
	\begin{multline}
		\| \qgeps(f, f) - \qgeps(h,h)\|_E
		\leq \| \qgeps(f,f-h))\|_E 
		+ \|\qgeps(f-h,h)\|_E\\
		= \|\qgeps(f, f-h)\|_{L^1} + \|\qgeps(f, f-h)\|_{L^p}\\
		+ \|\qgeps(f-h, h)\|_{L^1} + \|\qgeps(f-h,h)\|_{L^p}\\
		\leq \frac{16}{\eps} \|f\|_{L^1}\|f -h\|_{L^1}
		+ \frac{16}{\eps} \|f\|_{L^p}\|f-h\|_{L^1}
		+ \frac{16}{\eps} \|f-h\|_{L^1}\|h\|_{L^1}\\
		+ \frac{16}{\eps} \|f-h\|_{L^p}\|h\|_{L^1}
		= \frac{16}{\eps} \|f-h\|_{L^1}\left(2 + \|f\|_{L^p}\right)
		+ \frac{16}{\eps} \|f-h\|_{L^p}\\
		\leq \frac{16}{\eps}(2+C)\|f-h\|_E.
	\label{continuity}\end{multline}
We have shown that $\qgeps$ is continuous; in particular, it is Holder continuous for any $\beta \in (0,1),$ therefore (\ref{holder}) holds.

Next, we prove subtangency. Fix $f\in F.$ For any $\beta > 0,$ it suffices to find $\delta_0 = \delta_0(f,\beta)>0$ and $\omega \in F$ such that for any $\delta \in (0,\delta_0),$ 
	\begin{equation}
		\frac{1}{\delta}\left\| f+ \delta \qgeps(f,f) -\omega \right\|_E < \beta.
	\label{subtan}\end{equation}

For $R, \delta>0$ we define $f_R$ and $\omega = \omega(f, \delta, R)$ in the same way as it is in Proposition 5.1 in \cite{alonsogambatran}: $f_R: = f\1_{B_R}$, $\omega: = f + \delta\qgeps(f_R, f_R).$ We will find suitable values for $R_0$ and $\delta_0$ and use them to define an $\omega_0 = \omega(f, \delta_0, R_0)$ for which (\ref{subtan}) will hold.

First, we show that $\omega \in F$. Indeed, for any $f\in F,$ $\delta, R >0,$ $\omega = (1 - 8\eps^{-1}\delta)f + \delta\qgeps^+(f_R, f_R) \geq (1 - 8\eps^{-1}\delta)f \geq 0$ whenever $\delta \leq \eps/8.$ Then $\|\omega\|_{L^1} = \int \omega = \int f + \delta\int\qgeps(f_R, f_R) = \int f = \int\fo = 1$ because $\int\qgeps(h,h) = 0$ for all $h \in L^1.$ Lastly, thanks to Theorem \ref{thm1},
	\begin{multline*}
		\|\omega\|_{L^p} 
		 \leq  \left(1-\frac{8}{\eps}\delta\right)\|f\|_{L^p} 
		+ \frac{16}{\eps}\delta\|f_R\|_{L^p}\|f_R\|_{L^1}\\
		\leq \left(1+ \frac{8}{\eps}\delta\right)\|f\|_{L^p}
		\leq 2C
	\end{multline*}
whenever $\delta < \eps/8.$ This shows that $\omega \in F$ for small enough $\delta.$
Now, 
	\begin{multline}
		\|f + \delta\qgeps(f,f) - \omega\|_E
		= \delta\|(\qgeps(f,f) - \qgeps(f_R,f_R)\|_E\\
		\leq \delta\frac{16}{\eps}\left(2+C\right)\|f-f_R\|_E
	\end{multline}
by (\ref{continuity}). Let $R = R_0$ be large enough so that $\|f-f_{R_0}\|_{E} \leq \delta\beta\frac{\eps}{16} \left(2+ C\right)^{-1}.$ Then if $0< \delta < \delta_0:= \min\{\eps/8,\beta\},$
	\begin{equation*}
		\frac{1}{\delta}\|f + \delta\qgeps(f,f) - \omega_0\|_E
		\leq \delta
		< \beta,
	\end{equation*}
so we have (\ref{subtan}).

Finally, we prove the one sided Lipschitz condition. For $f, h \in F,$ let $\phi:= \qgeps(f,f) - \qgeps(h,h)$ and $\psi := f-h.$ For $\delta >0$ and $\omega_f,\omega_h \in F,$
	\begin{multline*}
		\|\psi + \delta\phi\|_E 
		-\|\psi\|_E
		=\| f+\delta\qgeps(f,f) -h - \delta\qgeps(h,h)\|_E
		- \|f-h\|_E\\
		\leq \|f + \delta\qgeps(f,f) - \omega_f\|_E
		+ \|h + \delta\qgeps(h,h) - \omega_h\|_E
		+ \|\omega_f - \omega_h\|_E
		-\|f-h\|_E,
	\end{multline*}
Now, fix $\beta >0$. By the subtangency condition, there exists $\delta_0>0$ and $\omega_0^f,\omega_0^h \in F$ such that for any $0<\delta < \delta_0$,
	\begin{align*}
		& \|f + \delta\qgeps(f,f) - \omega_0^f\|_E \leq \frac{\delta\beta}{4},\\
		& \|h + \delta\qgeps(h,h) - \omega_0^h\|_E \leq \frac{\delta\beta}{4}.
	\end{align*}
By the construction of $\omega_0^f, \omega_0^h,$ for a large enough $R_0$ and $R>R_0,$
	\begin{multline*}
		\|\omega_0^f - \omega_0^h\|_E
		- \|f-h\|_E\\
		\leq \|f-h\|_E 
		+ \delta\|\qgeps(f_R, f_R) - \qgeps(h_R, h_R)\|_E
		-\|f-h\|_E\\
		\leq \delta\frac{16}{\eps}(2+C)\|f_R - h_R\|_E.
	\end{multline*}
If we choose $R_0$ large enough so that for $R>0,$
	\begin{equation*}
		\|f_R - h_R\|_E
		\leq \frac{\beta \eps}{2(2+C)},
	\end{equation*}
then the Lipschitz condition follows.

\end{proof}

\newtheorem{hthm'}[bressan]{Remark}
\begin{hthm'}
It is not hard to check that the H- theorem still holds for $\qgeps.$ This means that any nonnegative f that solves the Boltzmann equation (\ref{bteqgeps}) preserves, mass, momentum and kinetic energy, and has decreasing entropy.
Therefore, these properties did not need to be included in our choice of $F$.
\end{hthm'}


\section{A uniform bound on $\bm{\feps}$}

%
	
\newtheorem{integrability}[lemma3]{Theorem}
\begin{integrability}
	Let $\feps = \feps(v,t) \geq 0 \in L^1(\R^3)$ be a weak solution to the Boltzmann equation with nonnegative initial data $0\leq \feps(v,0) :=f_0 \in L^1_2(\R^3)\cap L^p(\R^3) \cap L\log L(\R^3),$ $1\leq p \leq \infty.$ 
Then $\feps$ remains in $L^p(\R^3)$, uniformly in $\eps$ and time. More specifically, 
	\begin{equation}
		\|\feps(\cdot, t)\|_{L^p(\R^3)} 
		\leq \max\left\{16e^{8\|\fo\|_{L\log L}}, \|\fo\|_{L^p}\right\}
		\text{ for all } t>0.
	\end{equation}

\label{integrability}\end{integrability}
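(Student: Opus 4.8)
The plan is to turn the evolution equation (\ref{bteqgeps}) into a differential inequality for $t\mapsto\|\feps(\cdot,t)\|_{L^p}$ and close it with the gain-part estimate of Lemma \ref{riclemma}, exploiting that for the kernel $\geps$ the loss term has the clean dissipative form $\qgeps^-(\feps,\feps)=\frac{8}{\eps}\feps$ (using $\|\feps(\cdot,t)\|_{L^1}=1$).

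Take first $1\le p<\infty$. Multiplying (\ref{bteqgeps}) by $p\feps^{p-1}\ge0$ and integrating in $v$ — which can be made rigorous starting from the weak formulation via a time-mollification/renormalization argument, or by using the mild (Duhamel) formulation — yields
\begin{equation*}
\frac1p\frac{d}{dt}\|\feps(\cdot,t)\|_{L^p}^p
=-\frac8\eps\|\feps\|_{L^p}^p+\int_{\R^3}\qgeps^+(\feps,\feps)\,\feps^{p-1}\dv
\le-\frac8\eps\|\feps\|_{L^p}^p+\|\qgeps^+(\feps,\feps)\|_{L^p}\,\|\feps\|_{L^p}^{p-1},
\end{equation*}
the last step by H\"older with exponents $p',p$. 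Insert the bound of Lemma \ref{riclemma} and choose its free parameter to be $K:=e^{4\|\fo\|_{L\log L}}$ (any fixed $K>1$ if $\|\fo\|_{L\log L}=0$), so that the entropy term $\frac{16}{\eps\log K}\|\fo\|_{L\log L}$ equals exactly $\frac4\eps$ and eats half of the dissipation:
\begin{equation*}
\frac1p\frac{d}{dt}\|\feps\|_{L^p}^p
\le-\frac4\eps\|\feps\|_{L^p}^p+\frac{16}{\eps}K^{1/p'}\|\feps\|_{L^p}^{p-\frac12}
=\frac4\eps\|\feps\|_{L^p}^{p-\frac12}\Bigl(4K^{1/p'}-\|\feps\|_{L^p}^{\frac12}\Bigr).
\end{equation*}
Since $K>1$ and $p'\ge1$, one has $16K^{2/p'}\le16K^2=16e^{8\|\fo\|_{L\log L}}=:M_0$, so the right-hand side is strictly negative whenever $\|\feps(\cdot,t)\|_{L^p}>M_0$. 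A barrier argument now concludes: $\|\feps(\cdot,t)\|_{L^p}$ is continuous, equals $\|\fo\|_{L^p}$ at $t=0$, and is strictly decreasing on any time interval on which it exceeds $M_0$; hence it never leaves $[0,\max\{M_0,\|\fo\|_{L^p}\}]$, and this bound is independent of $\eps$ and $t$.

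For $p=\infty$ I would deduce the bound by a limiting argument rather than rerun the estimate. Since $\fo\in L^1_2\cap L^\infty\cap L\log L$, log-convexity of $q\mapsto\|\fo\|_{L^q}$ gives $\fo\in L^q$ with $\|\fo\|_{L^q}\le\max\{\|\fo\|_{L^1},\|\fo\|_{L^\infty}\}$ for every $q\in[1,\infty)$, so the finite-$q$ case applies to the same weak solution and yields $\|\feps(\cdot,t)\|_{L^q}\le\max\{M_0,\|\fo\|_{L^\infty}\}$ uniformly in $q$; letting $q\to\infty$ gives $\|\feps(\cdot,t)\|_{L^\infty}\le\max\{M_0,\|\fo\|_{L^\infty}\}$. (Alternatively, one can run the analogous Duhamel--Gronwall argument directly with the $L^\infty$ estimate (\ref{riclemmaeq2}).) The step I expect to be the real obstacle is the first reduction: rigorously justifying the time-differentiation of $\|\feps(\cdot,t)\|_{L^p}^p$ from the weak formulation (time regularity of weak solutions) and verifying the exact sign and constant of the loss contribution $-\frac8\eps\|\feps\|_{L^p}^p$ — it is precisely the $|u|$-localization built into $\geps$ that makes a uniform $L^p$ a priori estimate available here, unlike for the classical collision kernel.
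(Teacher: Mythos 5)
Your proposal is correct and follows essentially the same route as the paper: the same H\"older pairing of $\qgeps^+$ against $\feps^{p-1}$, the same application of Lemma \ref{riclemma} with the choice $K=e^{4\|\fo\|_{L\log L}}$ so that the entropy term absorbs exactly half of the loss term $-\frac{8}{\eps}\|\feps\|_{L^p}^p$, and the same resulting differential inequality, which the paper closes by substituting $u=\|\feps\|_{L^p}^{1/2}$ and invoking an ODE comparison/maximum principle --- just a packaged form of your barrier argument, yielding the same constant $\max\{16e^{8\|\fo\|_{L\log L}},\|\fo\|_{L^p}\}$. The only divergence is minor: for $p=\infty$ the paper runs the pointwise maximum-principle estimate directly from (\ref{riclemmaeq2}) (the alternative you mention) rather than your $q\to\infty$ limiting argument, and both the paper and you leave the initial formal differentiation of $\|\feps(\cdot,t)\|_{L^p}^p$ at the same level of rigor.
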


\begin{proof}
Without loss of generality, $\|\feps\|_{L^1} = 1.$ Fix $\eps >0$ and let $K := e^{4\|\fo\|_{L\log L}} > 1$ (provided $f_0 \not\equiv 0$).
We begin with the case $p\neq\infty.$
By (\ref{riclemmaeq1}),
	\begin{multline}
		\eps\|\qgeps(\feps,\feps)(\cdot, t)\|_{L^p(\R^3)}\\
		\leq 16 K^{\frac{1}{2p'}}\|\feps(\cdot, t)\|_{L^p(\R^3)}^{\frac{1}{2}}
		+ 8\|\feps(\cdot, t)\|_{L^p(\R^3)}\left( \frac{2}{\log K}\|f_0\|_{L\log L(\R^3)}
		-  1\right)\\
		= 16 K^{\frac{1}{2p'}}\|\feps(\cdot, t)\|_{L^p(\R^3)}^{\frac{1}{2}}
		- 4\|\feps(\cdot, t)\|_{L^p(\R^3)}.
	\end{multline} 


Then
	\begin{multline}
		\eps p\|\feps(\cdot, t)\|_{L^p}^{p-1}\frac{d}{dt}\|\feps(\cdot, t)\|_{L^p(\R^3)}\\
		= \eps\frac{d}{dt}\|\feps(\cdot, t)\|_{L^p(\R^3)}^p
		= \eps\int \frac{d}{dt}\feps(v,t) \feps^{p-1}(v,t)\dv\\
		= \eps\int_{\R^3} \qgeps^+(\feps,\feps)(v,t) \feps^{p-1}(v,t)\dv
		- 8\int\feps^p(v,t)\dv\\
		\leq \eps\|\qgeps^+(\feps, \feps)(\cdot, t)\|_{L^p(\R^3)}\|\feps(\cdot, t)\|_{L^{p}(\R^3)}^{p-1}
		- 8\|\feps\|_{L^p}^p\\
		\leq  16K^{ \frac{1}{2p'}}\|\feps(\cdot,t)\|_{L^p(\R^3)}^{p-\frac{1}{2}}
		 -4 \|\feps(\cdot,t)\|_{L^p(\R^3)}^p.
	\label{normode}\end{multline}	 
Multiplying both sides of (\ref{normode}) by $\frac{1}{2}\|\feps(\cdot, t)\|_{L^p(\R^3)}^{\frac{1}{2} - p},$ 
	\begin{equation}
		\eps p\frac{d}{dt}\|\feps(\cdot, t)\|_{L^p}^{\frac{1}{2}}
		\leq 8 K
		-2 \|\feps(\cdot, t)\|_{L^p}^\frac{1}{2}.
	\label{odenorm2}\end{equation}
Then if $u(t): = \|\feps(\cdot, t)\|_{L^p}^{\frac{1}{2}}$, by (\ref{odenorm2}),
	\begin{equation}
		\begin{dcases}
			& u'(t) \leq \frac{8K}{\eps p}
			- \frac{2}{\eps p}u(t)\\
			& u(0) = \|\fo\|_{L^p}^{\frac{1}{2}}.
		\end{dcases}
	\label{ode}\end{equation}

A maximum principle then shows us that 
	\begin{multline*}
		\|\feps(\cdot, t)\|_{L^p(\R^3)} 
		= u^2(t)
		\leq \left(4K + (u(0) - 4K)e^{-\frac{2t}{\eps p}}\right)^2 \\
		\leq \max\left\{ 4K, \|f_0\|_{L^p(\R^3)}^{\frac{1}{2}}\right\}^2 
		\leq \max\left\{ 16e^{8\|\fo\|_{L\log L}}, \|f_0\|_{L^p(\R^3)}\right\}.
	\end{multline*}

Now, suppose that $p=\infty.$ By (\ref{riclemmaeq2}),
	\begin{equation*}
		\eps\|\qgeps^+(\feps,\feps)(\cdot, t)\|_{L^\infty(\R^3)}
		\leq  16K 
		+ 16\frac{\|f_0\|_{L\log L(\R^3)}}{\log K}\|\feps(\cdot, t)\|_{L^\infty(\R^3)}.
	\end{equation*}
Then
	\begin{multline*}
		\eps\frac{d}{dt}\feps(v,t)
		\leq \eps \|\qgeps^+(\feps,\feps)(\cdot, t)\|_{L^\infty(\R^3)}
		- 8\feps(v,t)\\
		\leq 16K
		+ 16\frac{\|f_0\|_{L\log L(\R^3)}}{\log K}\|\feps(\cdot, t)\|_{L^\infty}
		- 8\feps(v,t).
	\end{multline*}
In particular, by definition of supremum, this means that for all $v\in \R^3,$
		\begin{multline}
		\eps\frac{d}{dt}\feps(v,t)
		\leq 16K\\
		+ 8\left(2\frac{\|f_0\|_{L\log L(\R^3)}}{\log K}
		- 1\right)\feps(v,t)
		= 16K
		- 4\feps(v,t).
	\end{multline}
The maximum principle then tells us that
	\begin{equation*}
		\feps(v,t) 
		\leq (\fo - 4K)e^{-\frac{4t}{\eps}} + 4K,
	\end{equation*}
so that, finally,
	\begin{multline}
		\|\feps\|_{L^\infty}
		\leq (\|\fo\|_{L^\infty} - 4K)e^{-\frac{4t}{\eps}} + 4K
		\leq \max\left\{4e^{4\|\fo\|_{L\log L}}, \|\fo\|_{L^\infty}\right\}\\
		\leq \max\left\{16e^{8\|\fo\|_{L\log L}}, \|\fo\|_{L^\infty}\right\}.
	\end{multline}
\end{proof}

\bibliographystyle{plain} 
\bibliography{mybib}

\end{document}